\newtheorem{definition}{Definition}
\newtheorem{theorem}{Theorem}[section]
\newtheorem{corollary}[theorem]{Corollary}
\newtheorem{lemma}[theorem]{Lemma}
\newtheorem{proposition}[theorem]{Proposition}
\theoremstyle{definition}
\newtheorem{remark}[theorem]{Remark}
\numberwithin{equation}{section}
\def\punpfrac#1#2{(#1)/(#2)}
\def\uncfrac#1#2{#1/#2}
\def\unfrac#1#2{#1/#2}
\def\prefrac#1#2{\tfrac{1}{#2}#1}
\def\prepfrac#1#2{\tfrac1{#2}(#1)}
\begin{document}

\baselineskip=17pt

\title{Linear fractional transformations and nonlinear leaping convergents of some continued fractions}

\author{Christopher Havens, Stefano Barbero, Umberto Cerruti, Nadir Murru \\
Department of Mathematics, University of Turin\\
Via Carlo Alberto 10, 10123, Torino, Italy}

\date{}

\maketitle

\begin{abstract}
For $\alpha_0 = \left[a_0, a_1, \ldots\right]$ an infinite continued fraction and $\sigma$ a
linear fractional transformation, we study the
continued fraction expansion of $\sigma(\alpha_0)$ and its convergents. 
We provide the continued fraction expansion of
$\sigma(\alpha_0)$ for four general families of continued fractions and when
$\left|\det \sigma\right| = 2$. We also find nonlinear recurrence
relations among the convergents of $\sigma(\alpha_0)$ which allow us to
highlight relations between convergents of $\alpha_0$ and $\sigma(\alpha_0)$.
Finally, we apply our results to some special and well-studied continued
fractions, like Hurwitzian and Tasoevian ones, giving a first study about
leaping convergents having steps provided by nonlinear functions.
\end{abstract}

\section{Introduction}

Continued fractions, which are classical objects in number theory, represent
any real number $\alpha_0$ by means of a sequence $a_i$ of integers:
\begin{equation*}\label{cf}\alpha_0 = a_0 + \cfrac{1}{a_1 + \cfrac{1}{a_2 + \cfrac{1}{\ddots}}}.\end{equation*} 
The integers $a_i$, called the \emph{partial quotients}, can be found by
\[
\begin{cases} a_k = [\alpha_k] \quad\hbox{(that is, the integer part of $\alpha_k$)}
\cr \alpha_{k+1} = \cfrac{1}{\alpha_k-a_k}
\quad \text{if $\alpha_k$ is not an integer} \end{cases} \quad\hbox{for } k = 0, 1, 2,
\ldots.
\]
The \emph{$n$-th convergent}  of $\alpha_{0}$ is the rational number
\begin{equation*}\label{conv}\frac{p_{n}}{q_{n}}= a_0 + \cfrac{1}{a_1 +
\cfrac{1}{a_2 + \cfrac{1}{\ddots+\cfrac{1}{
a_{n-1}+\cfrac{1}{a_{n}}}}}}.
\end{equation*}
For short, we will write
\begin{equation*}\label{cfcomp} \alpha_{0}=[a_0, a_1, a_2,
\ldots,a_{n},\ldots],\quad
\frac{p_{n}}{q_{n}}=[a_{0},a_{1},\ldots,a_{n}]\end{equation*} 
and we call the $n$-th \emph{tail} of $\alpha_0$ the continued fraction
\begin{equation*}\label{tail} t_{n}=[a_{n+1},a_{n+2},\ldots].\end{equation*}

Let us consider a linear fractional transformation
$\sigma(x)=\punpfrac{Ax+B}{Cx+D}$ with $A, B, C, D \in \mathbb{Z}$. The
determinant of $\sigma$ is $\det \sigma := AD - BC$. It is well known that
$\alpha_0$ and $\sigma(\alpha_0)$ have the same tails, for $n$ sufficiently
large, if and only if $\left|\det \sigma\right| = 1$ (see for example \cite{Rock}). In this
paper, we show a relation between the tail of $\sigma(\alpha_0)$
and the tail of $\alpha_0$ when $\left|\det \sigma\right|=2$ and the partial quotients of
$\alpha_0$ have some specific behaviors. When the value of $\left|\det \sigma\right|$
increases, some numerical experiments show that the previous relations are very
hard to find. 
Hence, in the general case, that is, when $\left|\det \sigma\right|=d$ with $d\neq0,\pm1$, it
is very hard to predict the behaviour of $\sigma(\alpha_{0})$. 
The proofs in this paper are greatly based on the fact that $\sigma$ has determinant $\pm 2$, in fact the main idea is to write the matrix $\begin{pmatrix} A & B \cr C & D \end{pmatrix}$ as an unimodular matrix multiplied by auxiliary matrices that commutate with the $R-L$ matrices used by Raney \cite{Ran} for representing continued fractions, like in proposition \ref{Sdec}.
When $\left| \det \sigma \right| > 2$, similar relations are more hard to give and we think that other ways should be used for approaching the problem for general values of determinants. However, the results of this paper could help in finding a solution to the general case.

The Gosper algorithm is a useful tool for evaluating a linear fractional
transformation of a given continued fraction and it can be also used for
determining sums and products of continued fractions. The original algorithm
was presented in \cite[Item 101B]{Gosper2}. Some descriptions about the Gosper
algorithm can be also found in  \cite[pp. 347--352]{Fowler}, \cite{Hall},
\cite{Liardet} and \cite{Vardi}.
Another standard algorithm, based on the representation of a continued fraction
as a product of powers of particular matrices, is due to Raney \cite{Ran}.
These algorithms allow us to obtain the partial quotients of $\sigma(\alpha_0)$
starting from the partial quotients of $\alpha_0$; however they are not
suitable for determining general relations, since they would need to use the values
of the partial quotients of $\alpha_0$.
Some authors have studied the linear fractional transformation of continued
fractions with bounded partial quotients \cite{Lag}, \cite{Pan}. In \cite{Lee},
the author presented an algorithm for determining the continued fraction
expansion of a linear fractional transformation of power series. See also
\cite{Astels}, \cite{Divis}.

The results obtained in this paper can also be used to determine the linear
fractional transformation of certain Hurwitzian and Tasoevian continued fractions,
as well as studying their leaping convergents.

\begin{definition}
	A \emph{Hurwitz continued fraction} is a quasi-periodic continued fraction of the kind
	$$[a_0,\ldots, a_r, \overline{f_1(k),\ldots, f_s(k)}]_{k=h}^{+\infty} = [a_0,\ldots, a_r, f_1(h),\ldots, f_s(h), f_1(h+1), \ldots, f_s(h+1),\ldots],$$
	for certain integers $r, s, h$, where $f_1(k), \ldots f_s(k)$ are polynomials in $k$ with rational coefficients.
\end{definition}

\begin{definition}
	A \emph{Tasoev continued fraction} is a quasi-periodic continued fraction of the kind
	$$[a_0,\ldots, a_r, \overline{g_1(k),\ldots, g_s(k)}]_{k=h}^{+\infty} = [a_0,\ldots, a_r, g_1(h),\ldots, g_s(h), g_1(h+1), \ldots, g_s(h+1),\ldots],$$
	for certain integers $r, s, h$, where $g_1(k), \ldots g_s(k)$ are exponentials in $k$ with rational coefficients.
\end{definition}

A famous example of Hurwitz continued fraction is this expansion of $e$:
$$e = [2, 1, 2, 1, 1, 4, 1, \ldots] = [2, \overline{1, 2k, 1}]_{k=1}^{+\infty}.$$
There are other many known Hurwitz continued fractions that represent
\emph{e-type} numbers like
$$e^{1/m} = [1, \overline{m(2k-1)-1, 1, 1}]_{k=1}^{+\infty}
\quad\hbox{and}\quad
\cfrac{e-1}{e+1} = [0, \overline{4 k + 2}]_{k=0}^{+\infty}.$$
Some numbers expressed by means of the tangent function also have a
quasi-periodic continued fraction expansion, for example

$$\tan 1 = [1, \overline{2 k - 1, 1}]_{k=1}^{+\infty}$$
and
$$\sqrt{\frac{m}{n}}\tan\frac{1}{\sqrt{mn}} = [0, n-1, \overline{1, (4k-1)m-2, 1, (4k+1)n-2}]_{k=1}^{+\infty}.$$

The properties of Hurwitz continued fractions is a
classical research field; see for example \cite{Davis}, \cite{Lehmer1},
\cite{Lehmer}, \cite{Matt}, \cite{Walters}.
 Many recent studies have dealt with their leaping convergents. Usually,
given the convergents $p_i/q_i$ of a continued fraction, the leaping
convergents are $p_{ri+j}/q_{ri+j}$, with $r\geq2$ and $0\leq j \leq r-1$ fixed
integers, for $i=0, 1, 2,\ldots{}$. In the case of Hurwitz continued fractions,
the leaping step $r$ is usually (but not necessarily) chosen to be equal to the
length of the period.

In \cite{Elsner}, the author found a recurrence formula for the leaping
convergents $p_{3i+1}/q_{3i+1}$ of the continued fraction of $e$. A similar
result was found by Komatsu \cite{Komatsu1} for the leaping convergents
$p_{3i}/q_{3i}$ of $e^{1/m}$. In \cite{Komatsu2}, Komatsu extended these
results to more general Hurwitz continued fractions, and in \cite{Elsner2} a
recurrence formula was also found for non-regular Hurwitz continued fractions.
In \cite{Komatsu7}, Komatsu proved other recurrence formulas and closed
forms for leaping convergents of generalized tanh-type Hurwitz continued
fractions. In several works, the goal was to find closed forms for the
convergents of some Hurwitz and Tasoev continued fractions and study their
leaping convergents; see \cite{Komatsu6}, \cite{Komatsu4},
\cite{Komatsu5}, \cite{Komatsu3}, \cite{Komatsu8}, and \cite{Komatsu9}.
As a consequence of our results, we find relations involving leaping
convergents of some Hurwitz and Tasoev continued fractions having steps
provided by nonlinear functions.

\section{Linear fractional transformations of some continued fractions}
In this section we study the relations between the continued fraction
expansions of $\alpha_0$ and $\sigma(\alpha_0)$, when $\left| \det \sigma
\right|=2$.
We deal with infinite continued fractions of four different types.
\begin{definition} \label{cftypes} 
\begin{itemize}
	\item[1)] The continued fraction
	\begin{equation*}\label{cf1}
	\left[e_{0},e_{1},e_{2},\ldots,e_{i},\ldots{}\right],
	\end{equation*}
	where $e_{i}$, $i=0,1,\ldots{}$, are even positive integers, is called type 1 (CF1).
	\item[2)] The continued fraction
	\begin{equation*}\label{cf2}
	\left[d_{0},d_{1},d_{2},\ldots,d_{i},\ldots{}\right],
	\end{equation*}
	where $d_{i}$, $i=0,1,\ldots{}$, are odd positive integers, is called type 2 (CF2).
	\item[3)] The continued fraction
	\begin{equation*}\label{cf3}
	\left[d_{0},e_{1},d_{2},e_{3}\ldots,d_{2i},e_{2i+1},\ldots\right],
	\end{equation*}
	where $d_{2i}$, $e_{2i+1}$, $i=0,1,\ldots{}$, are odd and even positive
integers, respectively, is called type 3 (CF3).
	\item[4)] The continued fraction
	\begin{equation*}\label{cf4}
	\left[e_{0},d_{1},e_{2},d_{3}\ldots,e_{2i},d_{2i+1}\ldots\right],
	\end{equation*}
	where $e_{2i}$, $d_{2i+1}$, $i=0,1,\ldots{}$, are even and odd positive
integers, respectively, is called type 4 (CF4).
\end{itemize}
\end{definition}
In the following, we will take $d_{0}$ and $e_{0}$ to be positive integers
without loss of generality.
Now we give a classification of linear fractional transformations $\sigma$
with integer coefficients and $\left|\det \sigma\right|=2$.
\begin{proposition}\label{Sdec}
	Let $\sigma$ be a linear fractional transformation, i.e.,
$\sigma(x)=\frac{Ax+B}{Cx+D}$, where $x$ is a real number and $A,B,C,D\in
\mathbb{Z}$. Let $S$ be the associated matrix $$S =\begin{pmatrix}  A & B \cr C
& D \end{pmatrix}.$$ If $\left|\det \sigma\right| = \left|\det S\right| = 2$, then
\begin{itemize}	
\item[1)]
$S=TM$, 
\item[2)] $S=TMR$ or
\item[3)] $S=TMRJ$,
\end{itemize}
where $$M=\begin{pmatrix}  1 & 1 \cr 1 & -1 \end{pmatrix},\quad R=\begin{pmatrix}
1 & 1 \cr 0 & 1 \end{pmatrix},\quad J=\begin{pmatrix}  0 & 1 \cr 1 &
0\end{pmatrix}$$ and $T$ is a certain unimodular matrix with integer entries.
\end{proposition}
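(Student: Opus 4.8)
The plan is to reduce everything to a statement about the matrix $S$ modulo the unimodular group $\mathrm{GL}_2(\mathbb{Z})$ acting on the left. Since $\left|\det S\right| = 2$, by the Smith normal form (or simply by the theory of the Hermite normal form under left multiplication by $\mathrm{GL}_2(\mathbb{Z})$), there is a unimodular $U$ with integer entries such that $US$ is one of a short list of canonical representatives of index-$2$ sublattices of $\mathbb{Z}^2$; concretely, $US = \begin{pmatrix} 2 & 0 \cr 0 & 1 \end{pmatrix}$, $\begin{pmatrix} 1 & 0 \cr 0 & 2 \end{pmatrix}$, or $\begin{pmatrix} 2 & 1 \cr 0 & 1 \end{pmatrix}$ up to sign, these corresponding to the three index-$2$ subgroups of $(\mathbb{Z}/2\mathbb{Z})^2$ read off from the reduction $S \bmod 2$. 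So it suffices to check, for each such canonical form $N$, that $N$ can be written as $M$, $MR$, or $MRJ$ times a unimodular matrix on the left; equivalently that $M^{-1}N$, $(MR)^{-1}N$, or $(MRJ)^{-1}N$ is unimodular for an appropriate choice. Then $S = U^{-1} N$ and, folding $U^{-1}$ into the unimodular factor $T$, we get $S = T M$, $S = TMR$, or $S = TMRJ$ as claimed.

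The key computation is that $\det M = -2$, $\det R = 1$, $\det J = -1$, so $\det(M) = \det(MR) = \pm2$ and $\det(MRJ) = \pm 2$ as well, matching $\left|\det S\right| = 2$; this is the sanity check that the three cases are the only ones that can occur, and it is why exactly three auxiliary products appear (one "bare" $M$, and then $R$ and $RJ$ as the two nontrivial coset-adjusting tails). I would organize the case split by looking at the second row $(C, D)$ of $S$ modulo $2$: after left-multiplying by a suitable unimodular matrix we may assume $\gcd$ conditions that force $(C,D)$ into one of a few residue patterns, and then the first row is determined modulo the second up to a unimodular change, which is absorbed into $T$. For each pattern one exhibits the explicit $T$ by direct matrix multiplication $TM$, $TMR$, $TMRJ$ and solves the resulting linear system for the entries of $T$, checking $\det T = \pm 1$.

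The main obstacle I anticipate is purely bookkeeping: making the case analysis genuinely exhaustive and non-overlapping, i.e. being careful that every integer matrix of determinant $\pm 2$ really does fall into one of the three families and that the "certain unimodular matrix $T$" can always be chosen with integer entries (not merely rational). This is guaranteed by the divisibility structure — since $T = S M^{-1}$ or $S(MR)^{-1}$ or $S(MRJ)^{-1}$, and $M^{-1} = \tfrac12\begin{pmatrix} 1 & 1 \cr 1 & -1 \end{pmatrix}$, one must verify that the relevant row of $S$ has the parity needed to clear the denominator $2$; this is exactly the condition that selects which of the three cases applies. So the proof is really: compute $SM^{-1}$, $S(MR)^{-1}$, $S(MRJ)^{-1}$ symbolically, observe each has half-integer entries whose denominators are cleared precisely when $S \bmod 2$ lies in the corresponding one of the three index-$2$ subgroups, and note these three subgroups exhaust all possibilities since $\det S \equiv 0 \pmod 2$ forces $S \bmod 2$ to be singular hence of rank $\le 1$.
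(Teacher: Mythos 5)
Your proposal is correct and takes essentially the same route as the paper: both reduce to the three rank-one parity patterns of $S \bmod 2$ (equivalently, the three index-$2$ row lattices) and then check that $SM^{-1}$, $S(MR)^{-1}$ or $S(MRJ)^{-1}$ is an integral unimodular matrix in the corresponding case, which is exactly the paper's explicit $T$. One minor slip in your middle paragraph: $\left(\begin{smallmatrix}2&1\\0&1\end{smallmatrix}\right)$ is left-equivalent over $\mathbb{Z}$ to $\left(\begin{smallmatrix}2&0\\0&1\end{smallmatrix}\right)$, so your list of canonical representatives repeats a class and omits $\left(\begin{smallmatrix}1&1\\0&2\end{smallmatrix}\right)$; this does not affect your final argument, which bypasses the canonical forms entirely.
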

\begin{proof}
We observe that only one of the following situations can hold:
\begin{itemize}
	\item  The entries of the first row of $S$ have the same parity and the
entries of the second row have the same parity. In this case 
$$S = TM\quad\hbox{with}\quad T=\begin{pmatrix}  \frac{A+B}{2} & \frac{A-B}{2} \cr \frac{C+D}{2} & \frac{C-D}{2} \end{pmatrix}.$$
	\item In the first column of $S$ there is at least one odd entry and
the remaining entries are all even. In this case $$S = TMR\quad\hbox{with}\quad
T=\begin{pmatrix}  \frac{B}{2} & \frac{2A-B}{2} \cr \frac{D}{2} &
\frac{2C-D}{2} \end{pmatrix}.$$
	\item In the second column of $S$ there is at least one odd entry and
the remaining entries are all even. In this case $$S = TMRJ\quad\hbox{with}\quad
T=\begin{pmatrix}  \frac{A}{2} & \frac{2B-A}{2} \cr \frac{C}{2} &
\frac{2D-C}{2} \end{pmatrix}.$$
	\end{itemize}
Other possibilities for $S$ are forbidden by the condition
$\left|\det(S)\right|=2$.
\end{proof}

\begin{lemma}\label{matrix rel} Let us consider the four matrices
	$$M=\begin{pmatrix}  1 & 1 \cr 1 & -1 \end{pmatrix},\quad R=\begin{pmatrix}  1 & 1 \cr 0 & 1 \end{pmatrix},\quad L=\begin{pmatrix}  1 & 0 \cr 1 & 1 \end{pmatrix},\quad J=\begin{pmatrix}  0 & 1 \cr 1 & 0\end{pmatrix}.$$
Then the following equalities hold:
\begin{itemize}
\item [a)] $$JR^{h}=L^{h}J,\quad JL^{h}=R^{h}J,$$
$$\begin{pmatrix} 0 & 2 \cr 1 & 0 \end{pmatrix}L^h =R^{2h}\begin{pmatrix} 0 & 2
\cr 1 & 0 \end{pmatrix},\quad \begin{pmatrix} 0 & 1 \cr 2 & 0 \end{pmatrix}R^h
= L^{2h}\begin{pmatrix} 0 & 1 \cr 2 & 0 \end{pmatrix}$$
for any integer $h$;
\item[b)] $$MR^h = RL^{\frac{h-1}{2}}\begin{pmatrix} 0 & 2 \cr 1 & 0
\end{pmatrix},\quad \begin{pmatrix} 0 & 2 \cr 1 & 0 \end{pmatrix} R^h =
L^{\frac{h-1}{2}}\begin{pmatrix} 0 & 2 \cr 1 & 1 \end{pmatrix},$$
$$\begin{pmatrix} 0 & 1 \cr 2 & 0 \end{pmatrix}L^h =R^{\frac{h-1}{2}}LM, \quad
\begin{pmatrix} 0 & 2 \cr 1 & 1 \end{pmatrix}L^h =
RLR^{\frac{h-1}{2}}\begin{pmatrix} 0 & 1 \cr 2 & 0 \end{pmatrix}$$
for any odd integer $h$;
\item[c)] $$MR^{h}=RL^{\frac{h-2}{2}}\begin{pmatrix} 0 & 2 \cr 1 & 1 \end{pmatrix},\quad \begin{pmatrix} 0 & 2 \cr 1 & 0 \end{pmatrix} R^h = L^{\frac{h}{2}}\begin{pmatrix} 0 & 2 \cr 1 & 0 \end{pmatrix},$$
  $$\begin{pmatrix} 0 & 2 \cr 1 & 1 \end{pmatrix}L^h = RLR^{\frac{h-2}{2}}LM $$
  for any even integer $h$.
\end{itemize}
\end{lemma}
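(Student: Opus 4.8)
The plan is to verify each identity by direct matrix multiplication, exploiting the very simple closed forms for powers of the elementary matrices: $R^h=\begin{pmatrix}1&h\cr0&1\end{pmatrix}$ and $L^h=\begin{pmatrix}1&0\cr h&1\end{pmatrix}$ for every integer $h$, together with $J^2=I$. With these at hand every claimed equality reduces to comparing two $2\times2$ integer matrices entry by entry, so there is no genuine ``idea'' needed — the work is bookkeeping. I would organize it so that the later identities are deduced from the earlier ones rather than each being checked from scratch.

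First I would dispose of part (a). The relations $JR^hJ=L^h$ and $JL^hJ=R^h$ are immediate from conjugation by $J$, which swaps the two off-diagonal positions; multiplying on the right by $J$ gives the stated forms $JR^h=L^hJ$ and $JL^h=R^hJ$. For the two identities involving $\begin{pmatrix}0&2\cr1&0\end{pmatrix}$ and $\begin{pmatrix}0&1\cr2&0\end{pmatrix}$, I would just compute both sides: $\begin{pmatrix}0&2\cr1&0\end{pmatrix}L^h=\begin{pmatrix}2h&2\cr1&0\end{pmatrix}$ and $R^{2h}\begin{pmatrix}0&2\cr1&0\end{pmatrix}=\begin{pmatrix}2h&2\cr1&0\end{pmatrix}$, and symmetrically for the other one; these hold for all integers $h$.

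Next, parts (b) and (c). Here the parity hypothesis on $h$ is exactly what makes the exponents $\tfrac{h-1}{2}$, $\tfrac{h-2}{2}$, $\tfrac h2$ integers, so the statements even make sense. I would verify the first identity of (b), $MR^h=RL^{(h-1)/2}\begin{pmatrix}0&2\cr1&0\end{pmatrix}$, by computing the left side $MR^h=\begin{pmatrix}1&h+1\cr1&h-1\end{pmatrix}$ and the right side $RL^{(h-1)/2}\begin{pmatrix}0&2\cr1&0\end{pmatrix}=\begin{pmatrix}1&(h+1)/2\cr0&1\end{pmatrix}\begin{pmatrix}(h-1)/2&1\cr1&0\end{pmatrix}\!\cdot\!(\text{the }2\times2)$, and checking agreement; the remaining identities of (b) and all of (c) are handled the same way, and several of them follow from one another by multiplying through by $J$ and invoking part (a), or by transposing. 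In particular the even-$h$ identities in (c) are obtained from the odd-$h$ ones in (b) by the substitution $h\mapsto h\pm1$ combined with a single application of $R=R$, $L^{\pm1}$ shifts, so I would present (c) as corollaries of (b) wherever possible to keep the computation short.

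The only thing that could be called an obstacle is purely clerical: keeping track of which auxiliary matrix ($\begin{pmatrix}0&2\cr1&0\end{pmatrix}$ versus $\begin{pmatrix}0&2\cr1&1\end{pmatrix}$ versus $\begin{pmatrix}0&1\cr2&0\end{pmatrix}$) appears on which side, and making sure the half-integer exponents are used consistently with the parity assumption. There is no conceptual difficulty and no step that fails to generalize — each identity is an algebraic fact about $2\times2$ matrices that one confirms by multiplying out. I would therefore write the proof as a short remark that the identities follow from the explicit forms of $R^h$, $L^h$, $J^2=I$, together with the observation that multiplying the already-proved identities by $J$ on either side, and shifting $h$ by $1$, produces the rest.
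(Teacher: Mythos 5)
Your approach is the same as the paper's, which simply states that all the identities are straightforward to check by direct matrix multiplication; your plan of verifying them via the closed forms $R^h=\left(\begin{smallmatrix}1&h\\0&1\end{smallmatrix}\right)$, $L^h=\left(\begin{smallmatrix}1&0\\h&1\end{smallmatrix}\right)$, $J^2=I$, and deducing the even-$h$ cases in (c) from the odd-$h$ cases in (b) by writing $R^h=R^{h-1}R$, is correct. (Only a clerical slip: the intermediate matrices you display for $RL^{(h-1)/2}$ are mistranscribed, but the final entries $\left(\begin{smallmatrix}1&h+1\\1&h-1\end{smallmatrix}\right)$ on both sides do agree, so the verification goes through.)
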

\begin{proof}
	All these identities are straightforward to check.
\end{proof}
	Now, we are ready to describe the continued fraction expansion of
$\sigma(\alpha_0)$, when $\alpha_0$ is of type CF1, CF2, CF3 or CF4. Let us
recall that a continued fraction $[a_0, a_1, a_2, a_3,\ldots{}]$ is equivalent to
the matrix product
	$$R^{a_0}L^{a_1}R^{a_2}L^{a_3}\ldots;$$ 
see \cite{Ran}.	We will use this fact, Proposition \ref{Sdec}, and Lemma
\ref{matrix rel} for proving the following theorem.
	\begin{theorem}\label{transform}
	\begin{itemize}
			\item [1)] If $\alpha_0$ is a continued fraction of
type CF1, then it is equivalent to one of the following matrix products:
				 \begin{equation}\label{1.1}T\prod_{k=1}^{+\infty}\left(RL^{\frac{e_{k-1}-2}{2}}RLR^{\frac{e_{k}-2}{2}}L\right),\end{equation}
				 \begin{equation}\label{1.2} TR\prod_{k=1}^{+\infty}\left(L^{\frac{e_{2k-2}}{2}}R^{2e_{2k-1}}\right),\end{equation}
				 \begin{equation}\label{1.3}TR^{2e_{0}+1}\prod_{k=1}^{+\infty}\left(L^{\frac{e_{2k-1}}{2}}R^{e_{2k}}\right).\end{equation}
			\item [2)] If $\alpha_0$ is a continued fraction of
type CF2, then it is equivalent to one of the following matrix products:
 \begin{equation}\label{2.1}T\prod_{k=1}^{+\infty}\left(RL^{\frac{d_{3k-3}-1}{2}}R^{2d_{3k-2}}L^{\frac{d_{3k-1}-1}{2}}RLR^{\frac{d_{3k}-1}{2}}L^{2d_{3k+1}}R^{\frac{d_{3k+2}-1}{2}}L\right),\end{equation}
			 \begin{equation}\label{2.2}TRL^{\frac{d_{0}-1}{2}}R\prod_{k=1}^{+\infty}\left(LR^{\frac{d_{3k-2}-1}{2}}L^{2d_{3k-1}}R^{\frac{d_{3k}-1}{2}}LRL^{\frac{d_{3k+1}-1}{2}}R^{2d_{3k+2}}L^{\frac{d_{3k+3}-1}{2}}R\right),\end{equation}
		\small
		\begin{equation}\label{2.3} TR^{2d_{0}+1}L^{\frac{d_{1}-1}{2}}R\prod_{k=1}^{+\infty}\left(LR^{\frac{d_{3k-1}-1}{2}}L^{2d_{3k}}R^{\frac{d_{3k+1}-1}{2}}LRL^{\frac{d_{3k+2}-1}{2}}R^{2d_{3k+3}}L^{\frac{d_{3k+4}-1}{2}}R\right).\end{equation}
	\normalsize
		\item [3)] If $\alpha_0$ is a continued fraction of type CF3,
then it is equivalent to one of the following matrix products:
		 \begin{equation*}\label{3.1}T\prod_{k=1}^{+\infty}\left(RL^{\frac{d_{4k-4}-1}{2}}R^{2e_{4k-3}}L^{\frac{d_{4k-2}-1}{2}}RLR^{\frac{e_{4k-1}-2}{2}}L\right),\end{equation*}
		 \begin{equation*}\label{3.2}TRL^{\frac{d_{0}-1}{2}}RLR^{\frac{e_{1}-2}{2}}L\prod_{k=1}^{+\infty}\left(RL^{\frac{d_{4k-2}-1}{2}}R^{2e_{4k-1}}L^{\frac{d_{4k}-1}{2}}RLR^{\frac{e_{4k+1}-2}{2}}L\right),\end{equation*}
		 
		 \begin{equation*}\label{3.3}TR^{2d_{0}+1}\prod_{k=1}^{+\infty}\left(L^{\frac{e_{2k-1}}{2}}R^{2d_{2k}}\right).\end{equation*}
	\item [4)] If $\alpha_0$ is a continued fraction of type CF4, then it is equivalent to one of the following matrix products:
	 \begin{equation*}\label{4.1}TRL^{\frac{e_{0}-2}{2}}R\prod_{k=1}^{+\infty}\left(LR^{\frac{d_{4k-3}-1}{2}}L^{2e_{4k-2}}R^{\frac{d_{4k-1}-1}{2}}LRL^{\frac{e_{4k}-2}{2}}R\right),\end{equation*}
	 \begin{equation}\label{4.2}TR\prod_{k=1}^{+\infty}\left(L^{\frac{e_{2k-2}}{2}}R^{2d_{2k-1}}\right),\end{equation}
	 \begin{equation*}\label{4.3}TR^{2e_{0}+1}L^{\frac{d_{1}-1}{2}}RLR^{\frac{e_{2}-2}{2}}L\prod_{k=1}^{+\infty}\left(RL^{\frac{d_{4k-1}-1}{2}}R^{2e_{4k}}L^{\frac{d_{4k+1}-1}{2}}RL
	R^{\frac{e_{4k+2}-2}{2}}L \right).\end{equation*}
\end{itemize}
\end{theorem}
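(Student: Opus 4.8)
The plan is to convert the claim into a list of identities between infinite products of the matrices $R$ and $L$. By Raney's correspondence recalled above, $\alpha_0=[a_0,a_1,a_2,\dots]$ is equivalent to $W=R^{a_0}L^{a_1}R^{a_2}L^{a_3}\cdots$, so $\sigma(\alpha_0)$ is equivalent to $SW$, and the three products displayed in each item of the theorem will correspond precisely to the three possibilities for $S$ in Proposition~\ref{Sdec}. Write $S=T\Sigma$ with $T$ unimodular and $\Sigma\in\{M,\,MR,\,MRJ\}$; then $SW=T\cdot(\Sigma W)$, so it is enough to rewrite $\Sigma W$ as an explicit product of powers of $R$ and $L$ (up to a harmless trailing matrix, see below). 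For $\Sigma=M$ we treat $MW$ directly; for $\Sigma=MR$ we have $MRW=M\,R^{a_0+1}L^{a_1}R^{a_2}\cdots$, which is $MW$ with the first exponent raised by one; for $\Sigma=MRJ$ I would first push the $J$ to the far right using $JR^h=L^hJ$ and $JL^h=R^hJ$ from Lemma~\ref{matrix rel}(a), obtaining $MRJW=M\,R^{1}L^{a_0}R^{a_1}L^{a_2}\cdots J$, and then discard the trailing $J$: since the M\"obius transformation attached to an infinite $R$--$L$ product converges to the constant $\alpha_0$, appending a fixed matrix ``at infinity'' does not change the represented real number. In all three cases we are reduced to evaluating $M$ applied to an $R$--$L$ word whose exponents obey the rigid parity pattern forced by the type (all even for CF1, all odd for CF2, strictly alternating even/odd for CF3 and CF4), possibly with one exceptional leading exponent.

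The core of the proof is a ``shuttling'' computation that pushes an auxiliary determinant-$(-2)$ matrix through the word from left to right. One carries a state $X$, starting from $X=M$, which stays in the four-element set
\[
\left\{\; M,\quad \begin{pmatrix}0&2\\1&0\end{pmatrix},\quad \begin{pmatrix}0&1\\2&0\end{pmatrix},\quad \begin{pmatrix}0&2\\1&1\end{pmatrix}\;\right\}
\]
of matrices of determinant $-2$ occurring in Lemma~\ref{matrix rel}. Whenever the state meets the next factor $R^{h}$ or $L^{h}$ of the word, exactly one identity among parts (a), (b), (c) of Lemma~\ref{matrix rel} applies --- which one is forced by the parity of $h$, and hence by the type --- and it rewrites $XR^{h}$ (or $XL^{h}$) as a short word in $R,L$ followed by a new state $X'$ again in the above set; in two instances, namely $\begin{pmatrix}0&1\\2&0\end{pmatrix}L^{h}=R^{\frac{h-1}{2}}LM$ and $\begin{pmatrix}0&2\\1&1\end{pmatrix}L^{h}=RLR^{\frac{h-2}{2}}LM$, the state is reset to $M$. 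Thus the computation runs mechanically, emitting a longer and longer $R$--$L$ word while the state cycles through these four matrices.

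Two behaviours then arise. In the first sub-case of each type --- the one coming from $\Sigma=M$, where the word has a homogeneous parity pattern --- the state returns to $M$ after consuming a fixed number of factors of the word determined by the type (two factors for CF1), so the computation is exactly periodic, and reading off one period and reindexing produces the clean products such as \eqref{1.1} and \eqref{2.1}. In the sub-cases coming from $\Sigma=MR$ and $\Sigma=MRJ$ the exceptional leading exponent first creates a short aperiodic initial segment, after which one of two things occurs: either the state returns to $M$ and the computation becomes periodic, giving the products with a short prefactor such as \eqref{2.2} and \eqref{2.3}; or the state has settled on $\begin{pmatrix}0&2\\1&0\end{pmatrix}$, and then the only applicable identities are the doubling relation $\begin{pmatrix}0&2\\1&0\end{pmatrix}L^h=R^{2h}\begin{pmatrix}0&2\\1&0\end{pmatrix}$ of Lemma~\ref{matrix rel}(a) and $\begin{pmatrix}0&2\\1&0\end{pmatrix}R^h=L^{h/2}\begin{pmatrix}0&2\\1&0\end{pmatrix}$ of (c), so the state passes unchanged through the entire infinite tail and is discarded at infinity as above, giving the products such as \eqref{1.2} and \eqref{1.3}.

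I expect the main obstacle to be organisational rather than conceptual: there are four types times three decompositions, hence twelve chains of substitutions, and in each one must (i) select at every step the correct part of Lemma~\ref{matrix rel} according to the current parity, (ii) align the index shifts so that the period fits correctly inside the product $\prod_{k\ge 1}(\cdots)$ (this is where the bookkeeping is most delicate), (iii) check that every exponent produced --- the various $\frac{h-1}{2}$, $\frac{h-2}{2}$, $2h$ and $h/2$ --- is a nonnegative integer, which is exactly where the hypotheses that the $d_i$ are odd and positive (so $\frac{d_i-1}{2}\ge 0$) and that the $e_i$ are even and at least $2$ (so $\frac{e_i-2}{2}\ge 0$) are used, and (iv) correctly absorb the exceptional leading exponent and, where it is present, discard the determinant-$(-2)$ matrix left at infinity. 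The one genuine idea, already packaged in Lemma~\ref{matrix rel}, is that under absorption of powers of $R$ and $L$ the matrix $M$ generates only the small closed set above and returns to itself periodically; once that is in hand, proving Theorem~\ref{transform} is a finite, if lengthy, verification.
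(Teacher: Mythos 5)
Your proposal is correct and follows essentially the same route as the paper: decompose $S$ as $T$ times $M$, $MR$, or $MRJ$ via Proposition~\ref{Sdec}, then repeatedly apply the commutation identities of Lemma~\ref{matrix rel} to push the auxiliary determinant-$\pm2$ matrix rightward through the $R$--$L$ word until it either cycles back to $M$ periodically or passes unchanged through the tail and is discarded at infinity. Your ``state machine'' framing is just a cleaner packaging of the iteration the paper carries out explicitly for CF1 and CF2.
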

\begin{proof}
We prove the theorem for continued fractions of type CF1 and we give a sketch
of the proof for continued fractions of type CF2. The results concerning
continued fractions of type CF3 and CF4 can be easily proved with the same
techniques. To prove (\ref{1.1}), we consider the infinite product
$$MR^{e_{0}}L^{e_{1}}R^{e_{2}}L^{e_{3}}\ldots$$
and we use the first relation in Lemma \ref{matrix rel} part c), obtaining
$$RL^{\frac{e_{0}-2}{2}}\begin{pmatrix} 0 & 2 \cr 1 & 1 \end{pmatrix}L^{e_{1}}R^{e_{2}}L^{e_{3}}\ldots.$$
Using the third relation in Lemma \ref{matrix rel} part c), we have
$$RL^{\frac{e_{0}-2}{2}}RLR^{\frac{e_{1}-2}{2}}LMR^{e_{2}}L^{e_{3}}\ldots.$$
Now, we may iterate the applications of these two matrix relations, so that the
matrix $M$ vanishes on the right, and we find the product on the right of $T$ in
(\ref{1.1}).
Now, we deal with (\ref{1.2}) and consider the infinite product
$$MR^{e_{0}+1}L^{e_{1}}R^{e_{2}}L^{e_{3}}\ldots.$$
Applying the first relation in Lemma \ref{matrix rel} part b), since $e_{0}+1$ is an odd integer, we get
$$RL^{\frac{e_{0}}{2}}\begin{pmatrix} 0 & 2 \cr 1 & 0 \end{pmatrix}L^{e_{1}}R^{e_{2}}L^{e_{3}}\ldots.$$
Thanks to the third relation in Lemma \ref{matrix rel} part a), we have
$$RL^{\frac{e_{0}}{2}}R^{2e_{1}}\begin{pmatrix} 0 & 2 \cr 1 & 0 \end{pmatrix}R^{e_{2}}L^{e_{3}}\ldots,$$
then, using the second relation in Lemma \ref{matrix rel} part c), we find
$$RL^{\frac{e_{0}}{2}}R^{2e_{1}}L^{\frac{e_{2}}{2}}\begin{pmatrix} 0 & 2 \cr 1 & 0 \end{pmatrix}L^{e_{3}}\ldots.$$
Finally, we can apply the last two relations so that the matrix
$\left(\begin{smallmatrix} 0 & 2 \cr 1 & 0 \end{smallmatrix}\right)$ vanishes on the right,
obtaining the product on the right of $T$ in (\ref{1.2}).
 For a proof of (\ref{1.3}), we consider the infinite product
$$MRJR^{e_{0}}L^{e_{1}}R^{e_{2}}L^{e_{3}}\ldots.$$
Then we start using the first two relations in Lemma \ref{matrix rel} part a),
so that the matrix $J$ vanishes on the right and we find 
$$MRL^{e_{0}}R^{e_{1}}L^{e_{2}}R^{e_{3}}L^{e_{4}}\ldots.$$
Consequently, we apply the first relation in Lemma \ref{matrix rel} part b),
which gives
$$R\begin{pmatrix} 0 & 2 \cr 1 & 0 \end{pmatrix}L^{e_{0}}R^{e_{1}}L^{e_{2}}R^{e_{3}}L^{e_{4}}\ldots .$$
So we can infinitely iterate the third relation in Lemma \ref{matrix rel} part
a) and
the second relation in Lemma \ref{matrix rel} part c), so that the matrix
$\left(\begin{smallmatrix} 0 & 2 \cr 1 & 0 \end{smallmatrix}\right)$ vanishes on the right and we
retrieve the product on the right of $T$ in (\ref{1.3}).
In the case of contniued fractions of type CF2, for proving (\ref{2.1}), we
take into account that
$$MR^{d_{0}}L^{d_{1}}\cdots=RL^{\frac{d_{0}-1}{2}}\begin{pmatrix} 0 & 2 \cr 1 & 0 \end{pmatrix}R^{d_{2}}L^{d_{3}}\cdots=RL^{\frac{d_{0}-1}{2}}R^{2d_{1}}\begin{pmatrix} 0 & 2 \cr 1 & 0 \end{pmatrix}R^{d_{2}}L^{d_{3}}\cdots=$$
$$=RL^{\frac{d_{0}-1}{2}}R^{2d_{1}}L^{\frac{d_{2}-1}{2}}\begin{pmatrix} 0 & 2 \cr 1 & 1 \end{pmatrix}L^{d_{3}}R^{d_{4}}\cdots=RL^{\frac{d_{0}-1}{2}}R^{2d_{1}}L^{\frac{d_{2}-1}{2}}RLR^{\frac{d_{3}-1}{2}}\begin{pmatrix} 0 & 1 \cr 2 & 0 \end{pmatrix}R^{d_{4}}L^{d_{5}}\cdots=$$
$$=RL^{\frac{d_{0}-1}{2}}R^{2d_{1}}L^{\frac{d_{2}-1}{2}}RLR^{\frac{d_{3}-1}{2}}L^{2d_{4}}\begin{pmatrix} 0 & 1 \cr 2 & 0 \end{pmatrix}L^{d_{5}}R^{d_{6}}\cdots=$$
$$=RL^{\frac{d_{0}-1}{2}}R^{2d_{1}}L^{\frac{d_{2}-1}{2}}RLR^{\frac{d_{3}-1}{2}}L^{2d_{4}}R^{\frac{d_{5}-1}{2}}LMR^{d_{6}}L^{d_{7}}\cdots .$$
Similarly, for the purpose of proving (\ref{2.2}), we observe that
$$MR^{d_{0}+1}L^{d_{1}}\cdots=
RL^{\frac{d_{0}-1}{2}}\begin{pmatrix} 0 & 2 \cr 1 & 1 \end{pmatrix}L^{d_{1}}R^{d_{2}}\cdots=RL^{\frac{d_{0}-1}{2}}RLR^{\frac{d_{1}-1}{2}}\begin{pmatrix} 0 & 1 \cr 2 & 0 \end{pmatrix}R^{d_{2}}L^{d_{3}}\cdots=$$
$$=RL^{\frac{d_{0}-1}{2}}RLR^{\frac{d_{1}-1}{2}}L^{2d_{2}}\begin{pmatrix} 0 & 1 \cr 2 & 0 \end{pmatrix}L^{d_{3}}R^{d_{4}}\cdots=RL^{\frac{d_{0}-1}{2}}RLR^{\frac{d_{1}-1}{2}}L^{2d_{2}}R^{\frac{d_{3}-1}{2}}LMR^{d_{4}}L^{d_{5}}\cdots .$$
Finally, for a proof of (\ref{2.3}), we have
$$MRJR^{d_{0}}L^{d_{1}}R^{d_{2}}L^{d_{3}}R^{d_{4}}L^{d_{5}}\cdots=MRL^{d_{0}}R^{d_{1}}L^{d_{2}}R^{d_{3}}L^{d_{4}}R^{d_{5}}\cdots=$$
$$=R\begin{pmatrix} 0 & 2 \cr 1 & 0 \end{pmatrix}L^{d_{0}}R^{d_{1}}\cdots=R^{2d_{0}+1}\begin{pmatrix} 0 & 2 \cr 1 & 0 \end{pmatrix}R^{d_{1}}\cdots=R^{2d_{0}+1}L^{\frac{d_{1}-1}{2}}\begin{pmatrix} 0 & 2 \cr 1 & 1 \end{pmatrix}L^{d_{2}}R^{d_{3}}\cdots=$$
$$=R^{2d_{0}+1}L^{\frac{d_{1}-1}{2}}RLR^{\frac{d_{2}-1}{2}}\begin{pmatrix} 0 & 1 \cr 2 & 0 \end{pmatrix}R^{d_{3}}\cdots=R^{2d_{0}+1}L^{\frac{d_{1}-1}{2}}RLR^{\frac{d_{2}-1}{2}}L^{2d_{3}}\begin{pmatrix} 0 & 1 \cr 2 & 0 \end{pmatrix}L^{d_{4}}\cdots=$$
$$=R^{2d_{0}+1}L^{\frac{d_{1}-1}{2}}RLR^{\frac{d_{2}-1}{2}}L^{2d_{3}}R^{\frac{d_{4}-1}{2}}LMR^{d_{5}}L^{d_{6}}\cdots{} .
$$
\end{proof}
Since $T$ is an unimodular matrix, for $n$ sufficiently large, we find an
explicit expression for the tails of $\sigma(\alpha_0)$, where $\alpha_0$ is a
continued fraction of type CF1, CF2, CF3 or CF4. We will describe these tails
in the next corollary.
\begin{corollary} \label{corcf}
\begin{itemize}
\item If $\alpha_0$ is a continued fraction of type CF1 and $e_{k-1}\geq4$ for all $k\geq k_{0}\geq 1$, then the tail of $\sigma(\alpha_0)$ is 
\begin{equation}\label{t1.1}
\left[\overline{\frac{e_{k-1}-2}{2},1,1}\right]_{k=k_{0}}^{+\infty};
\end{equation}
otherwise it is
\begin{equation}\label{t1.2}
\left[\overline{\frac{e_{2k-2}}{2},2e_{2k-1}}\right]_{k=k_{0}}^{+\infty},
\end{equation}
\begin{equation}\label{t1.3}
\left[\overline{\frac{e_{2k-1}}{2},2e_{2k}}\right]_{k=k_{0}}^{+\infty}.
\end{equation}

\item If $\alpha_0$ is a continued fraction of type CF2 and $d_{3k-3}\geq3$ and $d_{3k-1}\geq3$ for all $k\geq k_{0}\geq 1$, then the tail of $\sigma(\alpha_0)$ is
\begin{equation}\label{t2.1}
\left[\overline{\frac{d_{3k-3}-1}{2},2d_{3k-2},\frac{d_{3k-1}-1}{2},1,1}\right]_{k=k_{0}}^{+\infty};
\end{equation}
if $d_{3k-2}\geq3$ and $d_{3k}\geq3$ for all $k\geq k_{0}\geq 1$, then it is
\begin{equation}\label{t2.2}
\left[\overline{\frac{d_{3k-2}-1}{2},2d_{3k-1},\frac{d_{3k}-1}{2},1,1}\right]_{k= k_{0}}^{+\infty};
\end{equation}
if $d_{3k-1}\geq3$ and $d_{3k+1}\geq3$ for all $k\geq k_{0}\geq 1$, then it is
\begin{equation}\label{t2.3}
\left[\overline{\frac{d_{3k-1}-1}{2},2d_{3k},\frac{d_{3k+1}-1}{2},1,1}\right]_{k=k_{0}}^{+\infty}.
\end{equation}

\item If $\alpha_0$ is a continued fraction of type CF3 and $d_{4k-4}\geq3$,
$d_{4k-2}\geq3$ and $e_{4k-1}\geq4$ for all $k\geq k_{0}\geq 1$, then the tail
of $\sigma(\alpha_0)$ is
\begin{equation}\label{t3.1}
\left[\overline{\frac{d_{4k-4}-1}{2},2e_{4k-3},\frac{d_{4k-2}-1}{2},1,1,\frac{e_{4k-1}-2}{2},1,1}\right]_{k=k_{0}}^{+\infty};
\end{equation}
if $d_{4k-2}\geq3$, $d_{4k}\geq3$ and $e_{4k+1}\geq4$ for all $k\geq k_{0}\geq1$, then it is
\begin{equation}\label{t3.2}
\left[\overline{\frac{d_{4k-2}-1}{2},2e_{4k-1},\frac{d_{4k}-1}{2},1,1,\frac{e_{4k+1}-2}{2},1,1}\right]_{k=k_{0}}^{+\infty};
\end{equation}
otherwise it is
\begin{equation}\label{t3.3}
\left[\overline{\frac{e_{2k-1}}{2},2d_{2k}}\right]_{k=k_{0}}^{+\infty}.
\end{equation}

\item  If $\alpha_0$ is a continued fraction of type CF4 and $d_{4k-3}\geq3$,
$d_{4k-1}\geq3$ and $e_{4k}\geq4$ for all $k\geq k_{0}\geq1$, then the tail of
$\sigma(\alpha_0)$ is
\begin{equation}\label{t4.1}
\left[\overline{\frac{d_{4k-3}-1}{2},2e_{4k-2},\frac{d_{4k-1}-1}{2},1,1,\frac{e_{4k}-2}{2},1,1}\right]_{k=k_{0}}^{+\infty};
\end{equation}
if $d_{4k-1}\geq3$, $d_{4k+1}\geq3$ and $e_{4k+2}\geq4$ for all $k\geq k_{0}\geq1$, then
\begin{equation*}\label{t4.2}
\left[\overline{\frac{e_{2k-2}}{2},2d_{2k-1}}\right]_{k=k_{0}}^{+\infty},
\end{equation*}
\begin{equation}\label{t4.3}
\left[\overline{\frac{d_{4k-1}-1}{2},2e_{4k},\frac{d_{4k+1}-1}{2},1,1,\frac{e_{4k+2}-2}{2},1,1}\right]_{k=k_{0}}^{+\infty}.
\end{equation}
\end{itemize}
\end{corollary}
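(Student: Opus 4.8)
The plan is to read the tails directly off the matrix products furnished by Theorem \ref{transform}, using the classical dictionary between regular continued fractions and words in $R$ and $L$ (see \cite{Ran}): the number $[b_0,b_1,b_2,\ldots]$ with $b_i\ge 1$ for $i\ge 1$ is represented by $R^{b_0}L^{b_1}R^{b_2}\cdots$, and two such words represent numbers with the same tail, from some index on, as soon as one is obtained from the other by left multiplication by a matrix of determinant $\pm 1$ --- this is exactly the $|\det\sigma|=1$ statement recalled in the introduction (see \cite{Rock}). So I would first fix the type of $\alpha_0$ and the shape of $S$ given by Proposition \ref{Sdec}, invoke the corresponding clause of Theorem \ref{transform} to write $\sigma(\alpha_0)$ as $T\cdot P$ with $T$ unimodular and $P$ one of the displayed infinite products, and then peel off a finite initial segment: absorbing $T$, any prefix factors, and the blocks of $P$ of index $k<k_0$ into a single matrix $T'$, one still has $\sigma(\alpha_0)=T'P'$ with $T'$ unimodular (since $T$, $R$, and $L$ all are) and $P'$ the purely periodic product formed by the blocks with $k\ge k_0$.

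Next I would verify that, under the stated hypotheses, $P'$ is literally the canonical $R$-$L$ word of a regular continued fraction. Every exponent occurring in the products of Theorem \ref{transform} is either a constant ($1$ or $2$), a ``doubled'' exponent of the form $2e$ or $2d$, or a ``halved'' exponent of the form $\tfrac{e-2}{2}$ or $\tfrac{d-1}{2}$. The halved exponents are integers exactly because the relevant $e$'s are even and the relevant $d$'s are odd, which is guaranteed by the type of $\alpha_0$ (Definition \ref{cftypes}); they are $\ge 1$ precisely when the corresponding $e\ge 4$, resp.\ $d\ge 3$, which are exactly the inequalities imposed for $k\ge k_0$; and the doubled and constant exponents are automatically $\ge 1$. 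One also checks, by inspection of each product, that the letters alternate $R,L,R,L,\ldots$ both within a block and across the seam between consecutive blocks. Hence the exponents of $P'$, read from left to right, form a periodic sequence of positive integers, and by the dictionary $P'$ represents a real number whose regular continued fraction has exactly this purely periodic sequence of partial quotients; matching the repeating pattern of exponents against the blocks of the product in Theorem \ref{transform} yields the bracketed periods in (\ref{t1.1})--(\ref{t4.3}). Since $T'$ is unimodular, the tail of $\sigma(\alpha_0)$ agrees from some index on with the tail of this number, which is the asserted expression. The three sub-cases inside each type are just the three shapes of $S$ from Proposition \ref{Sdec}; in those sub-cases whose tail has period length two (such as (\ref{t1.2}) and (\ref{t1.3})) the only halved exponents have the shape $e/2$ with $e\ge 2$, already $\ge 1$, so no extra size condition is needed, which is why they appear under ``otherwise''.

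The one genuinely delicate point is the alternation-and-positivity check in the previous paragraph: ``reading off the exponents'' produces a \emph{regular} continued fraction only if the word is strictly alternating and all of its exponents are $\ge 1$ (a vanishing exponent would merge two equal letters and alter the period), so one must confirm, separately for each product in Theorem \ref{transform}, that truncation to the blocks with $k\ge k_0$ leaves such a word --- this is routine but must be carried out case by case, and it is where the precise hypotheses $e\ge 4$ and $d\ge 3$ are pinned down. Everything else --- that a unimodular left factor changes only finitely many partial quotients, and that a periodic word in $R$ and $L$ yields a purely periodic continued fraction --- is exactly the theory already invoked for $|\det\sigma|=1$, so no new idea is required beyond Theorem \ref{transform} and Proposition \ref{Sdec}.
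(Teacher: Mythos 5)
Your proposal is correct and follows essentially the same route as the paper, which simply declares the corollary ``straightforward'' from the matrix representations of Theorem \ref{transform}; you supply exactly the intended details (absorbing the unimodular prefix and the blocks with $k<k_0$, then reading the periodic tail off the remaining $R$--$L$ word). Your explicit identification of where the hypotheses $e\geq 4$ and $d\geq 3$ enter --- ensuring the halved exponents are $\geq 1$ so the word is a genuine alternating regular continued fraction word --- is the only nontrivial point, and you handle it correctly.
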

\begin{proof}
	The proof is straightforward, thanks to the matrix representations
described in Theorem \ref{transform}.
\end{proof}
\begin{remark}
When $T$ has nonnegative entries and $\det T=1$, then $T$
can be also written as  a product of  powers of $R$ and $L$ matrices, with
exponents given by nonnegative integers; see \cite{Ran}. Thus, in this case,
by Theorem \ref{transform}, the relations for the tail of $\sigma(\alpha_0)$
hold for $n=1$.  \end{remark}

\section{Relations for the convergents}
Let $x$ be a continued fraction and $H_n = \uncfrac{u_n}{v_n}$ its convergents.
Given $B_n = \sigma(H_n)$ and $\beta = \sigma(x)$, we clearly have
$$\lim_{n\rightarrow +\infty} B_n = \beta.$$ 
In general, the $B_n$'s do not coincide with the convergents
$\uncfrac{U_n}{V_n}$ of $\beta$. However, when $\left|\det \sigma\right| = 2$ and $x$ is a
continued fraction of type CF1, CF2, CF3 or CF4, we have
$B_{n}=\unfrac{U_{t}}{V_{t}}$ for certain values of $n$ and $t$ defined by
nonlinear functions. 
We start proving a theorem which highlights nonlinear recurrence relations
involving the sequences $\{U_{t}\}_{t\geq0}$ and $\{V_{t}\}_{t\geq0}$.
\begin{theorem}\label{recrel}
Using the notation of Corollary \ref{corcf}, let us consider $p_{0} = k_0 - 1$
and define the functions $$g(p) =p_{0}+p +
2\left\lfloor \frac{p}{3}\right \rfloor, \quad h(p) =p_{0}+2p - 1 + \sin\left(
\prefrac{(p+1)\pi}{2} \right),$$
	$$v(p) =2^{\sin\left(\left(p+1+\left\lfloor \frac{p+2}{3}\right\rfloor\right)\frac{\pi}{2}\right)},\quad 
	 z(p) =2^{\sin\left(\prefrac{(p+2)\pi}{2}\right)},$$
	for $p\geq2$. Then the sequences $\{U_{t}\}_{t\geq0}$ and $\{V_{t}\}_{t\geq0}$ satisfy one of the following
nonlinear recurrence relations:
\begin{itemize}
\item[1)]	
	\begin{equation}\label{rec1}
	U_{s(p)}=e_{l_{1}(p)}U_{s(p-1)}+U_{s(p-2)},\quad V_{s(p)}=e_{l_{1}(p)}V_{s(p-1)}+V_{s(p-2)},
	\end{equation}
	when $x$ is a continued fraction of type CF1, with tail given by \ref{t1.1}, where $s(p)=p_{0}+3p$ and $l_{1}(p)=k_{0}+p-2$; 
\item [2)]
\begin{equation}\label{rec2}
	U_{g(p)}=G(p)U_{g(p-1)}+v(p)U_{g(p-2)},\quad V_{g(p)}=G(p)V_{g(p-1)}+v(p)V_{g(p-2)},
\end{equation}
when $x$ is a continued fraction of type CF2 expansion, where
\begin{equation*}\label{Gp}
G(p)=d_{l_{2}(p)}2^{\sin\left(\left(p+2+\left\lfloor \frac{p+1}{3}\right\rfloor\right)\frac{\pi}{2}\right)}
\end{equation*}
and 
\begin{equation*} 
l_{2}(p)=\begin{cases}3k_{0}+p-4 \quad  \text{for tail (\ref{t2.1})}\\
3k_{0}+p-3 \quad\text{for tail (\ref{t2.2})}\\
	3k_{0}+p-2 \quad \text{for tail (\ref{t2.3})};
\end{cases}
\end{equation*}
\item [3)]
\begin{equation}\label{rec3}
U_{h(p)}=H(p)U_{h(p-1)}+z(p)U_{h(p-2)},\quad V_{h(p)}=H(p)V_{h(p-1)}+z(p)V_{h(p-2)},
\end{equation}
when $x$ is a continued fraction of type CF3 or CF4, where
\begin{equation*} \label{Hp}
H(p) =a_{l_{3,4}(p)}2^{\sin\left(\left(p+2+\left\lfloor \frac{p+2}{4}\right\rfloor-\left\lfloor \frac{p}{4}\right\rfloor\right)\frac{\pi}{2}\right)}
\end{equation*}
and 
\begin{equation*} 
l_{3,4}(p)=\begin{cases}4k_{0}+p-5 \quad  \text{for tail (\ref{t3.1})}\\
4k_{0}+p-3 \quad\text{for tail (\ref{t3.2})}\\
4k_{0}+p-4 \quad \text{for tail (\ref{t4.1})}\\
4k_{0}+p-2 \quad \text{for tail (\ref{t4.3})},
\end{cases}
\end{equation*}
 with $a_{l_{3,4}(p)}=e_{l_{3,4}(p)}$ if $p$ is even and $a_{l_{3,4}(p)}=d_{l_{3,4}(p)}$ if $p$ is odd.
\end{itemize}
\end{theorem}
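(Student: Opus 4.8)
\emph{Plan of proof.} The plan is to transport the ordinary three-term recurrence of the convergents of $\alpha_0$ through $\sigma$ and then re-read it on the convergents of $\sigma(\alpha_0)$. First I would write $x=\alpha_0=[a_0,a_1,\dots]$ with convergents $H_m=u_m/v_m$ and set $U'_m=Au_m+Bv_m$, $V'_m=Cu_m+Dv_m$, so that $B_m=\sigma(H_m)=U'_m/V'_m$ (possibly not in lowest terms). Since $u_m=a_mu_{m-1}+u_{m-2}$ and $v_m=a_mv_{m-1}+v_{m-2}$, linearity in the pair $(u_m,v_m)$ gives at once
\begin{equation*}
U'_m=a_mU'_{m-1}+U'_{m-2},\qquad V'_m=a_mV'_{m-1}+V'_{m-2},
\end{equation*}
so it remains to identify $B_m$ with an explicit convergent of $\sigma(\alpha_0)$, up to a bounded factor.

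Next I would use that $[a_0,a_1,\dots]\sim R^{a_0}L^{a_1}\cdots$, so the partial products of $S\,R^{a_0}L^{a_1}\cdots$ are the matrices $\bigl(\begin{smallmatrix}U'_m & U'_{m-1}\\ V'_m & V'_{m-1}\end{smallmatrix}\bigr)$, up to swapping the columns according to the parity of $m$. Writing $S=TM$, $TMR$ or $TMRJ$ as in Proposition~\ref{Sdec} and running the telescoping of Theorem~\ref{transform}, after absorbing $a_0,\dots,a_m$ this product equals $\bigl(\begin{smallmatrix}U_t & U_{t-1}\\ V_t & V_{t-1}\end{smallmatrix}\bigr)E_m$, where $\bigl(\begin{smallmatrix}U_t&U_{t-1}\\ V_t&V_{t-1}\end{smallmatrix}\bigr)$ is a convergent matrix of $\sigma(\alpha_0)$ (using, as in the Remark after Corollary~\ref{corcf}, that $T$ eventually only shifts the index by a bounded amount), and $E_m$ is one of $M$, $\bigl(\begin{smallmatrix}0&2\\1&0\end{smallmatrix}\bigr)$, $\bigl(\begin{smallmatrix}0&2\\1&1\end{smallmatrix}\bigr)$, $\bigl(\begin{smallmatrix}0&1\\2&0\end{smallmatrix}\bigr)$, cycling with period equal to the super-period of Theorem~\ref{transform} ($2$ for CF1, $6$ for CF2, $4$ for CF3 and CF4). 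Extracting the relevant column of $\bigl(\begin{smallmatrix}U_t&U_{t-1}\\ V_t&V_{t-1}\end{smallmatrix}\bigr)E_m$ and absorbing the $1,1$ blocks present in the tails of Corollary~\ref{corcf} (for instance $2U_t+U_{t-1}=U_{t+2}$ when $c_{t+1}=c_{t+2}=1$), I expect to obtain in every case
\begin{equation*}
U'_m=\delta_m\,U_{\theta(m)},\qquad V'_m=\delta_m\,V_{\theta(m)},
\end{equation*}
with $\delta_m\in\{1,2\}$ periodic in $m$ and $\theta$ an explicit function of $m$ that becomes $s(p)$, resp.\ $g(p)$, $h(p)$, under the substitution $m=l_1(p)$, resp.\ $l_2(p)$, $l_{3,4}(p)$. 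Here $\delta_m=\gcd(U'_m,V'_m)$ divides $\det S=\pm2$, and a parity check of $(u_m,v_m)=(p_m,q_m)$ shows that $\delta_m=2$ exactly when $p_m$ and $q_m$ are both odd: this never happens for CF1, and happens $3$-periodically for CF2 and $4$-periodically for CF3 and CF4, in agreement with the cycle of the $E_m$'s.

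Then, substituting $U'_m=\delta_mU_{\theta(m)}$ into the recurrence for $U'_m$ (and likewise for $V$) and dividing by $\delta_m$ gives
\begin{equation*}
U_{\theta(m)}=\Bigl(a_m\tfrac{\delta_{m-1}}{\delta_m}\Bigr)U_{\theta(m-1)}+\tfrac{\delta_{m-2}}{\delta_m}\,U_{\theta(m-2)},
\end{equation*}
which is \eqref{rec1} when $\delta_m\equiv1$ (CF1) and \eqref{rec2}, \eqref{rec3} otherwise, once one checks that $a_m\delta_{m-1}/\delta_m$ equals $e_{l_1(p)}$, $G(p)$, $H(p)$ and that $\delta_{m-2}/\delta_m\in\{\tfrac12,1,2\}$ equals $1$, $v(p)$, $z(p)$ respectively. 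The four sub-cases within CF2, and within CF3/CF4, correspond to the three choices $S=TM,TMR,TMRJ$ together with the starting residue of the period, which only moves the offsets in $l_2$ and $l_{3,4}$. What is then left is purely computational: tabulating, over one super-period, the cycle of $E_m$ (hence of $\delta_m$ and of the increments of $\theta$), and verifying that the $3$-periodic (resp.\ $4$-periodic) sequences $\log_2(\delta_{m-1}/\delta_m)$ and $\log_2(\delta_{m-2}/\delta_m)$ are the values of $\sin\bigl((\cdots)\tfrac{\pi}{2}\bigr)$ for the stated floor arguments.

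The hard part will be this middle step, i.e.\ pinning down $\theta(m)$ and $\delta_m$ exactly, which requires careful bookkeeping through the telescoping of Theorem~\ref{transform}: the column-swap convention of the Raney correspondence, the bounded index shift contributed by $T$ (clean only for $p$ large, or when $T$ has nonnegative entries and $\det T=1$), the degeneration of a block like $L^{(a_i-1)/2}$ when $a_i$ takes its smallest admissible value, and, above all, the absorptions of the $1,1$ blocks, which are precisely what makes each $B_m$ a genuine convergent of $\sigma(\alpha_0)$ rather than merely a semiconvergent.
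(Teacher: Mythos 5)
Your plan is a genuinely different route from the paper's. The paper proves this theorem entirely ``downstream'' of Corollary \ref{corcf}: since the tail of $\sigma(x)$ is known explicitly, the convergents $U_t,V_t$ satisfy the ordinary three-term recurrence with those known partial quotients, and the stated nonlinear recurrences follow by eliminating the intermediate convergents over one period of the tail (e.g.\ for CF1, combining $U_n=U_{n-1}+U_{n-2}$ twice on each side of the $\tfrac{e-2}{2}$ step to get $U_{p_0+3p}=e\,U_{p_0+3p-3}+U_{p_0+3p-6}$); the convergents of $x$ never enter. You instead transport the recurrence of the $u_m,v_m$ through $\sigma$ and identify $U'_m=Au_m+Bv_m$ with $\delta_m U_{\theta(m)}$, $\delta_m\in\{1,2\}$. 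That identification is essentially the content of Theorem \ref{leaping}, which the paper proves \emph{after} and \emph{by means of} the present theorem; so you are inverting the logical order and proving both results at once. This is workable and arguably more illuminating (it explains the powers of $2$ in $G(p)$, $v(p)$, $z(p)$ as the ratios $\delta_{m-1}/\delta_m$, $\delta_{m-2}/\delta_m$, and I checked that your $\delta$-cycle reproduces the paper's values of $v$ and $G$ for CF2), but it front-loads all of the hard bookkeeping — the carry matrices $M$, $\left(\begin{smallmatrix} 0 & 2 \cr 1 & 0 \end{smallmatrix}\right)$, $\left(\begin{smallmatrix} 0 & 2 \cr 1 & 1 \end{smallmatrix}\right)$, $\left(\begin{smallmatrix} 0 & 1 \cr 2 & 0 \end{smallmatrix}\right)$, the column swaps, the absorption of the $1,1$ blocks, the shift from $T$ — into a step you leave as a plan. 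For this particular statement the paper's elimination argument is much shorter and needs none of that.

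One concrete slip: the criterion ``$\delta_m=2$ exactly when $p_m$ and $q_m$ are both odd'' is correct only for the decomposition $S=TM$, since then $\ker(S\bmod 2)=\ker(M\bmod 2)$ is spanned by $(1,1)$. For $S=TMR$ one gets $\delta_m=2$ iff $p_m$ is even, and for $S=TMRJ$ iff $q_m$ is even. The resulting sequences $\delta_m$ are still $3$-periodic (resp.\ $4$-periodic) with the same profile, only phase-shifted, which is exactly what produces the three offsets in $l_2$ (resp.\ the four in $l_{3,4}$) — so the conclusion survives, but the parity check as you state it would fail in two of the three cases of Proposition \ref{Sdec}.
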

\begin{proof}
\begin{enumerate}
\item The following equalities clearly hold, for all $p \geq 2$:
\begin{equation}\label{rec1.1}
U_{p_{0}+3p}=U_{p_{0}+3p-1}+U_{p_{0}+3p-2},\quad U_{p_{0}+3p-1}=U_{p_{0}+3p-2}+U_{p_{0}+3p-3},
\end{equation}
\begin{equation}\label{rec1.2}
U_{p_{0}+3p-2}=\prepfrac{e_{k_{0}+p-2}-2}{2}U_{p_{0}+3p-3}+U_{p_{0}+3p-4},
\end{equation}
\begin{equation}\label{rec1.3}
 U_{p_{0}+3p-3}=U_{p_{0}+3p-4}+U_{p_{0}+3p-5},\quad 
 U_{p_{0}+3p-4}=U_{p_{0}+3p-5}+U_{p_{0}+3p-6}.
\end{equation}
From (\ref{rec1.1}) and (\ref{rec1.3}), we find
\begin{equation}\label{rec1.5}
2U_{p_{0}+3p-4}=U_{p_{0}+3p-3}+U_{p_{0}+3p-6},
\end{equation}
\begin{equation}\label{rec1.4}
U_{p_{0}+3p}=U_{p_{0}+3p-3}+2U_{p_{0}+3p-2}.
\end{equation}
Finally, using (\ref{rec1.2}) and (\ref{rec1.5}) in (\ref{rec1.4}), we obtain
the first recurrence of (\ref{rec1}); the second one can be proved similarly.
\item We deal with three different possibilities: $p=3m$, $p=3m+1$ or $p=3m+2$, where $m$ is a positive integer.
First we observe that
\begin{equation*}\label{gpval1}
g(3m)=p_{0}+5m,\quad g(3m+1)=p_{0}+5m+1,\quad g(3m+2)=p_{0}+5m+2,
\end{equation*}
\begin{equation*}\label{gpval2}
g(3m-1)=p_{0}+5m-3, \quad g(3m-2)=p_{0}+5m-4,
\end{equation*}
\begin{equation*}\label{vpval}
v(3m)=2,\quad v(3m+1)=\tfrac{1}{2},\quad v(3m+2)=1,
\end{equation*}
\begin{equation*}\label{Gpval}
G(3m)=d_{l_{2}(3m)},\quad G(3m+1)=\prefrac{d_{l_{2}(3m+1)}}{2},\quad G(3m+2)=2d_{l_{2}(3m+2)}.
\end{equation*}
Since $\sigma(x)$ has one of the tails (\ref{t2.1}), (\ref{t2.2}) or
(\ref{t2.3}) when $x$ is a continued fraction of type CF2, we find the
following relations: 
\begin{equation*}\label{3m1}
U_{p_{0}+5m-1}=U_{p_{0}+5m-2}+U_{p_{0}+5m-3},\quad U_{p_{0}+5m-2}=\prepfrac{d_{l_{2}(3m)}-1}{2}U_{p_{0}+5m-3}+U_{p_{0}+5m-4},
\end{equation*}
\begin{equation}\label{3m2}
U_{p_{0}+5m}=U_{p_{0}+5m-1}+U_{p_{0}+5m-2},\quad U_{p_{0}+5m+1}=\prepfrac{d_{l_{2}(3m+1)}-1}{2}U_{p_{0}+5m}+U_{p_{0}+5m-1},
\end{equation}
from which we have
\begin{multline}\label{3m}
U_{g(3m)}=U_{p_{0}+5m}=d_{l_{2}(3m)}U_{p_{0}+5m-3}+2U_{p_{0}+5m-4}
\\
=G(3m)
U_{g(3m-1)}+v(3m)U_{g(3m-2)},\end{multline}
\begin{equation}\label{3m3}
U_{p_{0}+5m-1}=\tfrac{1}{2}(U_{p_{0}+5m}+U_{p_{0}+5m-3}).
\end{equation}
Using (\ref{3m3}) and (\ref{3m2}), we get
\begin{multline*}
U_{g(3m+1)}=U_{p_{0}+5m+1}=\prefrac{\left(d_{l_{2}(3m+1)}U_{p_{0}+5m}+U_{p_{0}+5m-3}\right)}{2}\\=G(3m+1)U_{g(3m)}+v(3m+1)U_{g(3m-1)}.
\end{multline*}
Moreover, from
\begin{multline*}
$$U_{g(3m+2)}=U_{p_{0}+5m+2}=2d_{l_{2}(3m+2)}U_{p_{0}+5m}+U_{p_{0}+5m+1}
\\
=G(3m+2)U_{g(3m+1)}+v(3m+2)U_{g(3m)},$$
\end{multline*}
the proof of the first recurrence in (\ref{rec2}) is complete. The second recurrence can be proved similarly.
\item The proof of recurrences (\ref{rec3}) is similar, considering the four different possibilities $p=4m$, $p=4m+1$, $p=4m+2$ and $p=4m+3$, where $m$ is a positive integer.
\end{enumerate}
\end{proof}
Now we are ready to prove when the $B_{n}$'s are equal to some convergents of $\beta$.
\begin{theorem}\label{leaping}
Let $\sigma$ be a linear fractional transformation with $\left|\det
\sigma\right| = 2$ and let $x$ be a continued fraction.  Given
$\{\unfrac{u_{t}}{v_{t}}\}_{t\geq0}$ the convergents of $x$,
$\{\unfrac{U_{t}}{V_{t}}\}_{t\geq0}$ the convergents of $\sigma(x)$, and
$B_{t}=\sigma(\unfrac{u_{t}}{v_{t}})$ for all $t\geq0$, we have that
\begin{itemize}
\item if $x$ is a continued fraction of type CF1 (with tail (\ref{t1.1})),
CF2, CF3 (with tail (\ref{t3.1}) or (\ref{t3.2})), or CF4 (with tail
(\ref{t4.1}) or (\ref{t4.3})), then one of the following equalities hold
\begin{equation}\label{eqconv1}
\frac{U_{s(p)}}{V_{s(p)}}=B_{l_{1}(p)}, 
\end{equation}
\begin{equation}\label{eqconv2}
 \frac{U_{g(p)}}{V_{g(p)}}=B_{l_{2}(p)},
 \end{equation}
 \begin{equation}\label{eqconv3}
 \frac{U_{h(p)}}{V_{h(p)}}=B_{l_{3,4}(p)}
\end{equation}
for any $p\geq3$, where $g$, $s$, $h$, $l_1$, $l_2$, $l_{3,4}$ are the functions defined in Theorem \ref{recrel};
\item if $x$ is a continued fraction expansion of type CF1 (with tails
(\ref{t1.2}) or (\ref{t1.3})), CF3 (with tail (\ref{t3.3})), or CF4 (with tail (\ref{4.2})), we have 
\begin{equation}\label{eqconv4}
\frac{U_{p}}{V_{p}}=B_{p}
\end{equation}
for any $p$ sufficiently large.
\end{itemize}
\end{theorem}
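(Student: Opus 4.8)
The plan is to run, in parallel with the proof of Theorem~\ref{recrel}, the matrix bookkeeping of Theorem~\ref{transform}, but now applied to the \emph{truncated} $RL$-products. Write $P_t=\begin{pmatrix}u_t & u_{t-1}\cr v_t & v_{t-1}\end{pmatrix}$ for the convergent matrix of $x$, so that $P_t$ is the truncation after $t+1$ factors of the product $R^{a_0}L^{a_1}R^{a_2}\cdots$ attached to $x$, and $B_t=\sigma(u_t/v_t)$ is the ratio read off the first column of $SP_t$. Put $\widetilde U_t:=Au_t+Bv_t$ and $\widetilde V_t:=Cu_t+Dv_t$, so $B_t=\widetilde U_t/\widetilde V_t$; since these are fixed $\mathbb{Z}$-linear combinations of $u_t,v_t$, they satisfy the \emph{same} three-term recurrence $w_t=a_tw_{t-1}+w_{t-2}$ as the convergents of $x$ (with $a_t$ the partial quotients of $x$: $a_t=e_t$ for CF1, $a_t=d_t$ for CF2, and so on). Finally $\det(SP_t)=\det S\cdot\det P_t=\pm2$, so $\gcd(\widetilde U_t,\widetilde V_t)\mid2$; write $c_t\in\{1,2\}$ for this common factor.

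For the leaping cases (first bullet) I would feed $P_t$ into the computation of Theorem~\ref{transform}: by Proposition~\ref{Sdec} write $S=TM$, $TMR$ or $TMRJ$, and commute $M$ rightward through the finitely many factors of $P_t$ using Lemma~\ref{matrix rel} (this introduces in turn the auxiliary matrices $\left(\begin{smallmatrix}0&2\cr1&0\end{smallmatrix}\right)$, $\left(\begin{smallmatrix}0&2\cr1&1\end{smallmatrix}\right)$, $\left(\begin{smallmatrix}0&1\cr2&0\end{smallmatrix}\right)$ and $J$, exactly as in the proof of Theorem~\ref{transform}). This yields $MP_t=\widehat W_t\,E_t$ (and likewise for $MR$, $MRJ$), where $\widehat W_t$ is a prefix of the transformed product of Theorem~\ref{transform} and $E_t$ is one of $M$ and the auxiliary matrices above, depending on $t$ only modulo the period of the transformed tail, so $\det E_t=\pm2$. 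Hence $SP_t=(T\widehat W_t)\,E_t$, where $T\widehat W_t=\begin{pmatrix}U_N & U_{N-1}\cr V_N & V_{N-1}\end{pmatrix}$ is the convergent matrix of $\sigma(x)$ at the level $N$ equal to the number of partial quotients accumulated in $T\widehat W_t$. Reading the first column of $SP_t$ expresses $\widetilde U_t$ (resp. $\widetilde V_t$) as an integer combination of $U_N,U_{N-1}$ (resp. $V_N,V_{N-1}$); since every tail relevant here contains the entries $1,1$ (see Corollary~\ref{corcf}), the recurrence step $U_M=U_{M-1}+U_{M-2}$ at such a partial quotient collapses that combination to a single convergent, giving $\widetilde U_t=c_t\,U_M$ and $\widetilde V_t=c_t\,V_M$ with the \emph{same} factor $c_t$, hence $B_t=U_M/V_M$. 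It then remains to match indices --- $M=s(p)$ when $t=l_1(p)$ for CF1 with tail \eqref{t1.1}, $M=g(p)$ when $t=l_2(p)$ for CF2, $M=h(p)$ when $t=l_{3,4}(p)$ for CF3/CF4 --- which gives \eqref{eqconv1}, \eqref{eqconv2}, \eqref{eqconv3} for $p\ge3$.

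An alternative that avoids the full edge-matrix case split: by the first paragraph, the sequences $\widetilde U_{l(p)}$ and $\widetilde V_{l(p)}$ (with $l$ one of $l_1,l_2,l_{3,4}$, which all step by one in $p$) satisfy a clean three-term recurrence in $p$, and this recurrence coincides with \eqref{rec1}, \eqref{rec2}, \eqref{rec3} once the powers $2^{\pm1}$ in $G(p),H(p),v(p),z(p)$ are absorbed into the ratios $c_{l(p-1)}/c_{l(p)}$ and $c_{l(p-2)}/c_{l(p)}$ of the (periodic) common factors; thus $\{c_{l(p)}U_{s(p)}\}_p$ and $\{\widetilde U_{l(p)}\}_p$ solve the same linear recurrence (and likewise for $g,h$), so that matching two consecutive base values closes an induction on $p$. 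The second bullet is the degenerate form of the same computation: for the expansions of Theorem~\ref{transform} with period-two tails \eqref{t1.2}, \eqref{t1.3}, \eqref{t3.3} and the CF4 case \eqref{4.2}, the emerging edge matrix has a first column with coprime entries, so $\widetilde U_t=U_N$ and $\widetilde V_t=V_N$ with no extraneous factor, and a direct count of partial quotients gives $N=t$ as soon as $t$ is large enough for the leading unimodular block $T$ not to interfere --- which is \eqref{eqconv4}.

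The step I expect to be the real work is not the algebra but the index bookkeeping: checking that the number $N$ of partial quotients produced by $T\widehat W_t$ is exactly $s(p)$, $g(p)$ or $h(p)$ when $t=l_1(p),l_2(p),l_{3,4}(p)$ --- i.e. that the floor functions and sines defining these functions, and $v,z,G,H$, genuinely encode the block lengths of the tails in Corollary~\ref{corcf} and the periodic pattern of $c_t\in\{1,2\}$. Keeping the finitely many edge matrices and the trailing matrix $T$ under control --- the latter being what fixes the thresholds ``$p\ge3$'' and ``$p$ sufficiently large'' --- is where all the care goes; once residues modulo the period are sorted out, everything reduces to Lemma~\ref{matrix rel} and the recurrences of Theorem~\ref{recrel}.
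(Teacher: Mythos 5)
Your proposal is correct and, in its ``alternative'' formulation, is essentially the paper's own proof: the paper sets $N_{l_2(p)}=Au_{l_2(p)}+Bv_{l_2(p)}$, $D_{l_2(p)}=Cu_{l_2(p)}+Dv_{l_2(p)}$, establishes the base cases $U_{g(p)}=N_{l_2(p)}$ or $N_{l_2(p)}/2$ (according to $p$ bmod $3$) via exactly the truncated $RL$-product identities with edge matrices $\left(\begin{smallmatrix}0&1\cr2&0\end{smallmatrix}\right)$ and $\left(\begin{smallmatrix}0&2\cr1&0\end{smallmatrix}\right)$ that you describe, and then closes the induction by matching the three-term recurrence for $N,D$ against the recurrences of Theorem \ref{recrel}, with your periodic factor $c_t\in\{1,2\}$ appearing as the factor $2$ at $p\equiv1\pmod 3$. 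Your first, purely matrix-theoretic route is a minor variant of the same computation carried out at every index rather than only at the base cases.
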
 
\begin{proof}
	We will only prove equality (\ref{eqconv2}), since the proofs for
(\ref{eqconv1}), (\ref{eqconv3}), and (\ref{eqconv4}) can be obtained 
similarly. Let $N_{l_{2}(p)}$ and $D_{l_{2}(p)}$ be the numerator and the
denominator of $B_{l_{2}(p)}$, respectively. Since
$B_{l_{2}(p)}=\sigma(H_{l_{2}(p)})$, we have 
	\begin{equation*}
	N_{l_{2}(p)}=d_{l_{2}(p)}N_{l_{2}(p-1)}+N_{l_{2}(p-2)},\quad D_{l_{2}(p)}=d_{l_{2}(p)}D_{l_{2}(p-1)}+D_{l_{2}(p-2)},
	\end{equation*}
 Now, we prove by induction that, if $p=3m$ or $p=3m+2$, then
\begin{equation}\label{ind1}
U_{g(p)}=N_{l_{2}(p)},\quad V_{g(p)}=D_{l_{2}(p)}; 
\end{equation}
if $p=3m+1$, then
\begin{equation}\label{ind2}
U_{g(p)}=\frac{N_{l_{2}(p)}}{2},\quad V_{g(p)}=\frac{D_{l_{2}(p)}}{2},
\end{equation}
with $m$ positive integer.
The inductive bases for (\ref{ind1}) and (\ref{ind2}) are the equalities
\begin{equation*}\label{indb1}
U_{g(3)}=U_{p_{0}+5}=N_{l_{2}(3)},\quad V_{g(3)}=V_{p_{0}+5}=D_{l_{2}(3)},
\end{equation*}
\begin{equation*}\label{indb2}
U_{g(5)}=U_{p_{0}+7}=N_{l_{2}(5)},\quad V_{g(5)}=V_{p_{0}+7}=D_{l_{2}(5)},
\end{equation*}
\begin{equation*}\label{indb3}
U_{g(4)}=U_{p_{0}+6}=\prefrac{N_{l_{2}(4)}}{2},\quad V_{g(4)}=V_{p_{0}+6}=\prefrac{D_{l_{2}(4)}}{2}.
\end{equation*}
These relations can be proved thanks to the matrix equalities
\begin{equation*}\label{meq1}
\begin{pmatrix} N_{l_{2}(4)} & N_{l_{2}(3)} \cr D_{l_{2}(4)} & D_{l_{2}(3)}\end{pmatrix}=SR^{d_{1}}\ldots L^{d_{l_{2}(4)}}=\begin{pmatrix} U_{p_{0}+5} & U_{p_{0}+6} \cr V_{p_{0}+5} & V_{p_{0}+6} \end{pmatrix}\begin{pmatrix} 0 & 1 \cr 2 & 0 \end{pmatrix} \quad \text{$p_{0}$ even},
\end{equation*}
\begin{equation*}\label{meq1.1}
\begin{pmatrix} N_{l_{2}(4)} & N_{l_{2}(5)} \cr D_{l_{2}(4)} & D_{l_{2}(5)}\end{pmatrix}=SR^{d_{0}}\ldots R^{d_{l_{2}(5)}}=\begin{pmatrix} U_{p_{0}+7} & U_{p_{0}+6} \cr V_{p_{0}+7} & V_{p_{0}+6} \end{pmatrix}\begin{pmatrix} 0 & 1 \cr 2 & 0 \end{pmatrix} \quad \text{$p_{0}$ even},
\end{equation*}
\begin{equation*}\label{meq2}
\begin{pmatrix} N_{l_{2}(3)} & N_{l_{2}(4)} \cr D_{l_{2}(3)} & D_{l_{2}(4)}\end{pmatrix}=SR^{d_{0}}\ldots R^{d_{l_{2}(4)}}=\begin{pmatrix} U_{p_{0}+6} & U_{p_{0}+5} \cr V_{p_{0}+6} & V_{p_{0}+5} \end{pmatrix}\begin{pmatrix} 0 & 2 \cr 1 & 0 \end{pmatrix} \quad \text{$p_{0}$ odd},
\end{equation*}
\begin{equation*}\label{meq2.2}
\begin{pmatrix} N_{l_{2}(5)} & N_{l_{2}(4)} \cr D_{l_{2}(5)} & D_{l_{2}(4)}\end{pmatrix}=SR^{d_{0}}\ldots L^{d_{l_{2}(5)}}=\begin{pmatrix} U_{p_{0}+6} & U_{p_{0}+7} \cr V_{p_{0}+6} & V_{p_{0}+7} \end{pmatrix}\begin{pmatrix} 0 & 2 \cr 1 & 0 \end{pmatrix} \quad \text{$p_{0}$ odd}.
\end{equation*}

In fact, by definition of $p_{0}$, we know that $p_{0}$ and $l_{2}(4)$
must have different parity, moreover, for $p_{0}$ even, we have
\begin{equation*}
SR^{d_{0}}\ldots L^{d_{l_{2}(4)}}=R^{a_{0}}\ldots R^{a_{p_{0}}}L^{\frac{d_{l_{2}(1)}-1}{2}}R^{2l_{2}(2)}L^{\frac{d_{l_{2}(3)}-1}{2}}RLR^{\frac{d_{l_{2}(4)}-1}{2}}\begin{pmatrix} 0 & 1 \cr 2 & 0 \end{pmatrix},
\end{equation*}
\begin{equation*}
SR^{d_{0}}\ldots R^{d_{l_{2}(5)}}=R^{a_{0}}\ldots R^{a_{p_{0}}}L^{\frac{d_{l_{2}(1)}-1}{2}}R^{2l_{2}(2)}L^{\frac{d_{l_{2}(3)}-1}{2}}RLR^{\frac{d_{l_{2}(4)}-1}{2}}L^{2d_{l_{2}(5)}}\begin{pmatrix} 0 & 1 \cr 2 & 0 \end{pmatrix},
\end{equation*}
while, for $p_{0}$ odd, 
\begin{equation*}
SR^{d_{0}}\ldots R^{d_{l_{2}(4)}}=R^{a_{0}}\ldots L^{a_{p_{0}}}R^{\frac{d_{l_{2}(1)}-1}{2}}L^{2l_{2}(2)}R^{\frac{d_{l_{2}(3)}-1}{2}}LRL^{\frac{d_{l_{2}(4)}-1}{2}}\begin{pmatrix} 0 & 2 \cr 1 & 0 \end{pmatrix},
\end{equation*}
\begin{equation*}
SR^{d_{0}}\ldots L^{d_{l_{2}(5)}}=R^{a_{0}}\ldots L^{a_{p_{0}}}R^{\frac{d_{l_{2}(1)}-1}{2}}L^{2l_{2}(2)}R^{\frac{d_{l_{2}(3)}-1}{2}}LRL^{\frac{d_{l_{2}(4)}-1}{2}}R^{2d_{l_{2}(5)}}\begin{pmatrix} 0 & 2 \cr 1 & 0 \end{pmatrix}.
\end{equation*}
Thus, if we suppose that (\ref{ind1}) and (\ref{ind2}) hold for all the
positive  integers $3,\ldots,p-1$, using the recurrence relations (\ref{rec2}),
we find 
$$U_{g(p)}=d_{l_{2}(p)}U_{g(p-1)}+2U_{g(p-2)}=d_{l_{2}}N_{l_{2}(p-1)}+N_{l_{2}(p-2)}=N_{l_{2}(p)}\quad
\text{if $p=3m$},$$
$$U_{g(p)}=\prefrac{d_{l_{2}(p)}}{2}U_{g(p-1)}+\tfrac{1}{2}U_{g(p-2)}
=\tfrac{1}{2}d_{l_{2}}N_{l_{2}(p-1)}+\tfrac{1}{2}N_{l_{2}(p-2)}=\prefrac{N_{l_{2}(p)}}{2}\quad
\text{if $p=3m+1$},$$
$$U_{g(p)}=2d_{l_{2}(p)}U_{g(p-1)}+U_{g(p-2)}=d_{l_{2}}N_{l_{2}(p-1)}+N_{l_{2}(p-2)}=N_{l_{2}(p)}\quad
\text{if $p=3m+2$},$$
and the equivalent relations for $V_{g(p)}$ and $D_{l_{2}(p)}$ are also true.
\end{proof}

\section{Nonlinear leaping convergents of some Hurwitzian and Tasoev continued fractions}

In this section we apply the previous results to some quasi-periodic continued
fractions.
Many well-studied Hurwitzian and Tasoevian continued fractions are continued fractions of type CF1, CF2, CF3 or CF4. 

Let us consider the Hurwitz continued fraction 
\begin{equation}\label{h1}
h(a,n)=[\overline{a(1 + k n)}]_{k=0}^{+\infty},
\end{equation}
for $a, n$ positive integers, whose convergents will be denoted by $H_p(a,n)$,
for $p = 0, 1,\ldots .$ The identity  $$H_p(a,n)=\frac{\sum
_{i=0}^{\left\lfloor \frac{p+1}{2}\right\rfloor } a^{p-2 i+1} \binom{p-i+1}{i}
\prod _{k=i}^{-i+p} (k n+1)}{\sum _{i=0}^{\left\lfloor \frac{p}{2}\right\rfloor
} a^{p-2 i} \binom{p-i}{i} \prod _{k=i+1}^{p-i} (k n+1)}$$ can be retrieved as
a special case of Corollary 7 in \cite{McLaughlin}. 
Moreover, it is simple to show that 
$$B_{p}=\sigma(H_{p}(a,n))=\frac{\sum _{i=0}^{\left\lfloor
\frac{p}{2}\right\rfloor } a^{p-2 i} \left(Aa (i n+1)
\binom{p-i+1}{i}+B\binom{p-i}{i}\right) \prod _{k=i+1}^{p-i} (k
n+1)+A\delta(p)}{\sum _{i=0}^{\left\lfloor \frac{p}{2}\right\rfloor } a^{p-2 i}
\left(Ca (i n+1) \binom{p-i+1}{i}+D\binom{p-i}{i}\right) \prod _{k=i+1}^{p-i}
(k n+1)+C\delta(p)},$$ 
for $p=0,1,\ldots$, where $\delta(p)=\frac{1-(-1)^p}{2}.$ We now observe that
the continued fraction (\ref{h1}) is
\begin{itemize}
	\item of type CF1, when $a$ is even, and the possible tails of $\sigma(h(a,n))$ are given by
	\begin{equation}\label{ht1.1}
	\left[\overline{\frac{1}{2}(a(1+n(k-1))-2), 1, 1}\right]_{k=k_{0}}^{+\infty},
	\end{equation}
	\begin{equation*}\label{ht1.2}
	\left[\overline{\frac{a}{2}(1+n(2k-2)), 2a(1+(2k-1)n)}\right]_{k=k_{0}}^{+\infty},
	\end{equation*}
	\begin{equation*}\label{ht1.3}
	\left[\overline{\frac{a}{2}(1+n(2k-1)), 2a(1+2kn)}\right]_{k=k_{0}}^{+\infty};
	\end{equation*}
	
	\item of type CF2, when $a$ is odd and $n$ even, and the possible tails of $\sigma(h(a,n))$ are given by
	\begin{equation}\label{ht2.1}
	\left[\overline{\frac{(a(1+3n(k-1))-1)}{2}, 2a(1+n(3k-2)), \frac{(a(1+n(3k-1))-1)}{2}, 1, 1}\right]_{k=k_{0}}^{+\infty}, 
	\end{equation}
	\begin{equation}\label{ht2.2}
	\left[\overline{\frac{(a(1+n(3k-2))-1)}{2}, 2a(1+n(3k-1)), \frac{(a(1+3kn)-1)}{2}, 1, 1}\right]_{k=k_{0}}^{+\infty}, 
	\end{equation}
	\begin{equation}\label{ht2.3}
	\left[\overline{\frac{(a(1+n(3k-1))-1)}{2}, 2a(1+3kn), \frac{(a(1+n(3k+1))-1)}{2}, 1, 1}\right]_{k=k_{0}}^{+\infty};
	\end{equation}
	\item of type CF3, when $a$, $n$ are odd and greater than 3, and the possible tails of $\sigma(h(a,n))$ are given by
		\begin{equation*}\label{ht3.3}
	\left[\overline{\frac{a}{2}(1+n(2k-1)), 2a(1+2kn)}\right]_{k=k_{0}}^{+\infty}.
	\end{equation*}
	\begin{equation*}\label{ht3}
	\left[\overline{w_1, w_2, w_3, w_4, w_5, w_6, w_7, w_8}\right]_{k=k_{0}}^{+\infty},
	\end{equation*}
	where $w_{4}=w_{5}=w_{7}=w_{8}=1$ and 
	\begin{equation}\label{ht3.1}
	\begin{split}
	w_{1}=\frac{a(1+(4k-4)n)-1}{2},\quad&  w_{2}=2a(1+(4k-3)n) \\
	w_{3}=\frac{a(1+(4k-2)n)-1}{2},\quad&  w_{6}=\frac{a(1+(4k-1)n)-1}{2},
	\end{split}
	\end{equation}
	or 
	\begin{equation}\label{ht3.2}
	\begin{split}
	w_{1}=\frac{a(1+(4k-2)n)-1}{2},\quad & w_{2}=2a(1+(4k-1)n)\\
	w_{3}=\frac{a(1+4kn)-1}{2}, \quad & w_{6}=\frac{a(1+(4k+1)n)-1}{2}.
	\end{split}
	\end{equation}

\end{itemize}
Thus, if $\{\unfrac{U_{t}}{V_{t}}\}_{t\geq0}$ is the sequence of convergents of
$\sigma(h(a,n))$, from Theorem \ref{leaping} we have 
\begin{equation}\label{eqconvh1}
\frac{U_{s(p)}}{V_{s(p)}}=B_{l_{1}(p)} 
\end{equation}
for $a$ even and tail (\ref{ht1.1}), with $s(p)=p_{0}+3p$, $l_{1}(p)=k_{0}+p-2,$
\begin{equation*}\label{eqconvh2}
\frac{U_{g(p)}}{V_{g(p)}}=B_{l_{2}(p)}
\end{equation*}
for $a$ odd and $n$ even with $$g(p) =p_{0}+p + 2\left\lfloor \frac{p}{3}\right \rfloor,\quad
l_{2}(p)=\begin{cases}3k_{0}+p-4 \quad  \text{for tail (\ref{ht2.1})}\\
3k_{0}+p-3 \quad\text{for tail (\ref{ht2.2})}\\
3k_{0}+p-2 \quad \text{for tail (\ref{ht2.3})}
\end{cases},$$
 and, finally,
\begin{equation*}\label{eqconvh3}
\frac{U_{h(p)}}{V_{h(p)}}=B_{l_{3}(p)}
\end{equation*}
for $a$, $n$ odd and greater than 3, with 
$$l_{3}(p)=\begin{cases}4k_{0}+p-5 \quad  \text{for tail (\ref{ht3.1})}\\
4k_{0}+p-3 \quad\text{for tail (\ref{ht3.2}),}\\
\end{cases}\quad
h(p) =p_{0}+2p - 1 + \sin\left( \prefrac{(p+1)\pi}{2} \right).$$
When $a$ is even, $\sigma(h(a,n))$ has tail (\ref{ht1.1}) and if $a_{p_{0}}=a_{0}=1$, $k_{0}=1$,
 we have a Hurwitzian continued fraction of the form 
$$[1, \overline{\alpha k + \beta, 1, 1}]_{k=1}^{+\infty} \quad\hbox{with } \alpha = \frac{an}{2},\quad\beta = \frac{a(1 - n) - 2}{2}.$$
 Therefore, identity (\ref{eqconvh1}) becomes $$\frac{U_{3p}}{V_{3p}}=B_{p-1}.$$ 
 These Hurwitz continued fractions have been studied by Komatsu
\cite{Komatsu}, who found the combinatorial expression for the leaping
convergents $\unfrac{U_{3p}}{V_{3p}}$; see Corollary 1, \cite{Komatsu}.
Regarding Tasoevian continued fractions, we may consider two examples:
 \begin{equation}\label{tasoev1}
 t_1(u,a) = [\overline{u a^k}]_{k=1}^{+\infty},
 \end{equation}
 \begin{equation}\label{tasoev2}
 t_2(u,v,a) = [\overline{u a^k, v a^k}]_{k=1}^{+\infty},
 \end{equation}
 where $a$, $u$ and $v$ are positive integers. The continued fraction (\ref{tasoev1}) is 
 \begin{itemize}
 	\item of type CF1, when $u$ is even or $a$ is even, with $\sigma(t_{1}(u,a)) $ having tails, for $k_{0}\geq1$, given by 
$ \left[\overline{\prepfrac{ua^{k}-2}{2},1,1}\right]_{k=k_{0}}^{+\infty}$
 if we suppose that $ua\geq4$, or
 $\left[\overline{\prefrac{ua^{2k-1}}{2},2ua^{2k}}\right]_{k=k_{0}}^{+\infty}$
and $ \left[\overline{\prefrac{ua^{2k}}{2},2ua^{2k+1}}\right]_{k=k_{0}}^{+\infty};$
 \item of type CF2, when $u$ and $a$ are odd, with $\sigma(t_{1}(u,a)) $ having tails, for $k_{0}\geq 1$ and $ua\geq3$, given by
$$ \left[\overline{\prepfrac{ua^{3k-2}-1}{2},2ua^{3k-1},\prepfrac{ua^{3k}-1}{2},1,1}\right]_{k=k_{0}}^{+\infty},$$
$$ \left[\overline{\prepfrac{ua^{3k-1}-1}{2},2ua^{3k},\prepfrac{ua^{3k+1}-1}{2},1,1}\right]_{k= k_{0}}^{+\infty},$$
$$ \left[\overline{\prepfrac{ua^{3k}-1}{2},2ua^{3k+1},\prepfrac{ua^{3k+2}-1}{2},1,1}\right]_{k=k_{0}}^{+\infty}.$$
\end{itemize} 
Thus, for $\sigma(t_1(u,a))$ one of the identities (\ref{eqconv1}), (\ref{eqconv2}), (\ref{eqconv4})  holds.
 On the other hand, the continued fraction (\ref{tasoev2}) is
 \begin{itemize}
 \item of type CF1, when $a$ is even or $u$ and $v$ are even, with
$\sigma(t_{2}(u,v,a))$ having tails, for $k_{0}\geq1$, given by
$$ \left[\overline{\prepfrac{ua^{k}-2}{2},1,1,\prepfrac{va^{k}-2}{2},1,1}\right]_{k=k_{0}}^{+\infty}$$
 if we suppose that $ua\geq4$ and $va\geq4$, or
 $\left[\overline{\prefrac{ua^{k}}{2},2va^{k}}\right]_{k=k_{0}}^{+\infty}$ and
$ \left[\overline{\prefrac{va^{k}}{2},2ua^{k+1}}\right]_{k=k_{0}}^{+\infty};$
 \item  of type CF2, when $a$, $u$, $v$ are odd, with $\sigma(t_{2}(u,v,a))$ having tails, for $k_{0}\geq1$ and $ua\geq3$, $va\geq3$, given by
$$ \left[\overline{\prepfrac{ua^{3k-2}-1}{2},2va^{3k-2},\prepfrac{ua^{3k-1}-1}{2},1,1,\prepfrac{ua^{3k-1}-1}{2},2ua^{3k},\prepfrac{va^{3k}-1}{2},1,1}\right]_{k=k_{0}}^{+\infty},$$
$$ \left[\overline{\prepfrac{va^{3k-2}-1}{2},2ua^{3k-1},\prepfrac{va^{3k-1}-1}{2},1,1,\prepfrac{ua^{3k}-1}{2},2va^{3k},\prepfrac{va^{3k+1}-1}{2},1,1}\right]_{k= k_{0}}^{+\infty},$$
 $$\left[\overline{\prepfrac{ua^{3k-1}-1}{2},2va^{3k-1},\prepfrac{ua^{3k}-1}{2},1,1,\prepfrac{va^{3k}-1}{2},2ua^{3k+1},\prepfrac{va^{3k+1}-1}{2},1,1}\right]_{k=k_{0}}^{+\infty};$$
\item of type CF3, if $a$ and $u$ are odd and $v$ is even, with $\sigma(t_{2}(u,v,a))$ having tails, for $k_{0}\geq1$ and $ua\geq3$, $va\geq4$, given by
$$\left[\overline{\prepfrac{ua^{2k-1}-1}{2},2va^{2k-1},\prepfrac{ua^{2k}-1}{2},1,1,\prepfrac{va^{2k}-2}{2},1,1}\right]_{k=k_{0}}^{+\infty},$$
$$\left[\overline{\prepfrac{ua^{2k}-1}{2},2va^{2k},\prepfrac{ua^{2k+1}-1}{2},1,1,\prepfrac{va^{2k+1}-2}{2},1,1}\right]_{k=k_{0}}^{+\infty},$$
$$\left[\overline{\prefrac{va^{k}}{2},2ua^{k+1}}\right]_{k=k_{0}}^{+\infty};$$
\item of type CF4, if $a$, $v$ are odd and $u$ is even, with
$\sigma(t_{2}(u,v,a))$ having tails for $k_{0}\geq1$ and $ua\geq4$, $va\geq3$
given by
$$\left[\overline{\prepfrac{va^{2k-1}-1}{2},2ua^{2k},\prepfrac{va^{2k}-1}{2},1,1,\prepfrac{ua^{2k+1}-2}{2},1,1}\right]_{k=k_{0}}^{+\infty},$$
$$\left[\overline{\prefrac{ua^{k}}{2},2va^{k}}\right]_{k=k_{0}}^{+\infty},$$
$$\left[\overline{\prepfrac{va^{2k}-1}{2},2ua^{2k+1},\prepfrac{va^{2k+1}-1}{2},1,1,\prepfrac{ua^{2k+2}-2}{2},1,1}\right]_{k=k_{0}}^{+\infty}.$$
\end{itemize}
Therefore for $\sigma(t_2(u,v,a))$ one of the identities (\ref{eqconv1}), (\ref{eqconv2}), (\ref{eqconv3}), (\ref{eqconv4}) holds.

\end{document}